\newtheorem{theorem}{Theorem}
\newtheorem{lemma}{Lemma}
\newtheorem{corollary}{Corollary}
\newcommand{\D}{\mathcal{D} (a, \gamma)}
\newcommand{\E}{\mathcal{E} (a, b, \gamma)}
\newcommand{\U}{\mathcal{U}_\varepsilon}
\renewcommand{\L}{\mathcal{L}_\varepsilon}
\begin{document}

\title[Iterated function systems with smooth invariant densities]{Random iterated function systems with smooth invariant densities}
\author{Tomas Persson}
\address{Institute of Mathematics, Polish Academy of Sciences, ulica \'Sniadeckich~{8}, Warszawa, Poland}
\email{tomasp@impan.gov.pl}
\urladdr{www.impan.gov.pl/~tomasp/}

\begin{abstract}
  We consider some random iterated function systems on the interval and show that the invariant measure has density in $\mathcal{C}^\infty$. To prove this we use some techniques for contractions in cone metrics, applied to the transfer operator.
\end{abstract}

\maketitle

\section{Introduction}
Let $\{f_1, \ldots, f_n\}$ be an iterated function system on an interval $I$, and assume that at each iterate we choose a map according to the probability vector $(p_1, p_2, \ldots, p_n)$.
We restrict to the case when all the maps are contracting. By \cite{DiaconisFreedman} there is then a unique invariant probability measure. In the case when the images $f_i (I)$ have non empty intersection, it is often hard to determine the nature of the invariant measure. 

For example, the invariant measure of the random iterated function system $\{f_{-1}, f_1 \}$, with $f_i (x) = \lambda x + i (1-\lambda)$, $I = [-1, 1]$ and $\lambda \in (1/2, 1)$ is known to have an invariant measure that is absolutely continuous with respect to Lebesgue measure for almost all $\lambda \in (1/2, 1)$, see \cite{Solomyak}. 
The method used to prove this and similar results for almost all parameters is usually referred to as ''transversality''.
Only for a few specific values of $\lambda$ is it known whether the invariant measure is absolutely continuous och singular with respect to Lebesgue measure, see for instance \cite{Erdos} and \cite{Garsia}.

One way to get results for a specific parameter is to change the problem and instead consider the iterated function system with some random perturbation, see for instance \cite{PeresSimonSolomyak} and \cite{BaranyPersson}. It is then possible to apply the method of transversality to prove that the invariant measure is absolutely continuous with respect to Lebesgue measure, provided that some conditions are satisfied.
In fact more is proved. The conditional measures conditioned on the random perturbations have density in $L_2$, \cite{PeresSimonSolomyak}.

It is sometimes claimed that transversality is essentially the only known method to obtain the results described above, \cite{PeresSimonSolomyak}. In this paper we use an other method, originating from \cite{Liverani}, to show that a random iterated function system, consisting of affine maps with some small random perturbation, has an invariant measure which is absolutely continuous with respect to Lebesgue measure, and that the density is infinitely many times differentiable. Hence, this paper has two purposes --- to strengthen previous results and to introduce a new method for iterated function systems with overlaps.

\newpage

\section{A random iterated function system}

Let $I = [-1, 1]$. Take $\varepsilon \geq 0$ small (we will later let $\varepsilon > 0$). For $k=1, 2, \ldots, n$, we define functions
\[
  f_{k,t} \colon x \mapsto \lambda x + a_k + b_k t,
\]
where $\lambda$, $a_k$ and $b_k$ are numbers such that $|\lambda| < 1$, $b_k \ne 0$ and $f_{k,t} (I) \subset I$ for all $0 \leq t \leq \varepsilon$.

Let $(p_1, p_2, \ldots, p_n)$ be a probability vector. We consider the random iterated function system consisting of the maps $f_{k,t}$ applied at each iterate with probability $p_k$ and $t$ distributed according to a smooth function $h$, independently of previous iterates. We thus assume that $h$ is a non negative on $[0, \varepsilon]$, and that $\int_0^\varepsilon h (x) \, \mathrm{d}x = 1$. If $\varepsilon = 0$, this should be interpreted in the natural way.

By \cite{DiaconisFreedman} there is a unique invariant measure $\mu$ of this iterated function system. We are interested in the  properties of this measure. Let $\mathcal{C}^\infty (I)$ denote the set of functions on $I$, differentiable infinitely many times.
The following theorem will be proved.

\begin{theorem} \label{the:theorem}
  Let $\mu$ be the invariant measure of the random iterated function system $\{f_{1,t}, \ldots, f_{n,t} \}$ defined above, with corresponding probabilities $0 < p_k \leq 1$. Then $\mu$ has density $\phi$ in $\mathcal{C}^\infty (I)$, and
  \[
    \sup | \phi^{(k)} | \leq \lambda^{ - \frac{(k+1)(k+2) }{2} } \biggl( \sum_{i = 1}^n \frac{p_i}{b_i} (h(\varepsilon) + h(0) + \varepsilon \sup |h'|) \biggr)^{k+1}.
  \]
\end{theorem}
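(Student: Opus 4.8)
\emph{Sketch of the argument.} The plan is to work with the transfer (Perron--Frobenius) operator $\mathcal{L}_\varepsilon$ of the system, which sends the density of a point to the density of its image after one random step:
\[
  (\mathcal{L}_\varepsilon \psi)(y) = \sum_{j=1}^{n} \frac{p_j}{\lambda} \int_0^\varepsilon \psi\!\left( \frac{y - a_j - b_j t}{\lambda} \right) h(t) \, \mathrm{d}t ,
\]
with $\psi$ understood as $0$ outside $I$. Since each $f_{j,t}$ is a contraction, by \cite{DiaconisFreedman} the iterates $\mathcal{L}_\varepsilon^m \psi_0$ converge weakly to $\mu$ for every starting probability density $\psi_0$, so $\mu = \mathcal{L}_\varepsilon\mu$ and it is enough to bound $\sup|(\mathcal{L}_\varepsilon^m \psi_0)^{(k)}|$ for each $k$ uniformly in $m$ and then let $m \to \infty$. (Alternatively, and with an exponential rate, one may realise $\mu$ as the unique fixed point of $\mathcal{L}_\varepsilon$ inside a Birkhoff cone on which $\mathcal{L}_\varepsilon$ contracts the Hilbert metric --- the method of \cite{Liverani} advertised in the abstract and the role of the cones $\mathcal{D}(a,\gamma)$, $\mathcal{E}(a,b,\gamma)$.)

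The engine of the argument is a smoothing estimate for $\mathcal{L}_\varepsilon$: a $y$-derivative of the integrand can be traded for a $t$-derivative, since $\partial_y \psi\!\left(\tfrac{y - a_j - b_j t}{\lambda}\right) = -\tfrac{1}{b_j}\,\partial_t \psi\!\left(\tfrac{y - a_j - b_j t}{\lambda}\right)$, and an integration by parts in $t$ then lands that derivative on $h$, leaving only boundary contributions at $t = 0$ and $t = \varepsilon$. Applied to the identity $(\mathcal{L}_\varepsilon \psi)^{(k)}(y) = \sum_j \lambda^{-(k+1)} p_j \int_0^\varepsilon \psi^{(k)}\!\left(\tfrac{y - a_j - b_j t}{\lambda}\right) h(t)\,\mathrm{d}t$, valid for $\psi \in \mathcal{C}^k$, this gives
\[
  \sup\bigl|(\mathcal{L}_\varepsilon \psi)^{(k+1)}\bigr| \le \frac{1}{\lambda^{k+1}} \biggl( \sum_{i=1}^{n} \frac{p_i}{b_i}\bigl( h(\varepsilon) + h(0) + \varepsilon \sup|h'| \bigr) \biggr) \sup\bigl|\psi^{(k)}\bigr| ,
\]
the three terms in the bracket being the two endpoint contributions of the integration by parts and the bound $\int_0^\varepsilon |h'| \le \varepsilon\sup|h'|$ for what remains. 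Together with the elementary zeroth-order bound $\sup|\mathcal{L}_\varepsilon \psi| \le \bigl(\sum_i p_i/b_i\bigr)\sup|h| \le C$ for a probability density $\psi$ (substitute $u = \tfrac{y - a_j - b_j t}{\lambda}$, use $\int\psi = 1$ and $\sup h \le h(0) + \varepsilon\sup|h'|$), where $C := \sum_i \tfrac{p_i}{b_i}(h(\varepsilon) + h(0) + \varepsilon\sup|h'|)$, this is exactly the recursion required; and a short argument (dominated convergence in the same substituted form) shows $\mathcal{L}_\varepsilon$ takes $L^1$ densities to continuous functions, so the endpoint values above are meaningful after one step.

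Now fix $k$ and $m \ge k + 2$, write $\mathcal{L}_\varepsilon^m \psi_0 = \mathcal{L}_\varepsilon^{k+1}\bigl(\mathcal{L}_\varepsilon^{m - k - 1}\psi_0\bigr)$, apply the zeroth-order bound to the probability density $\mathcal{L}_\varepsilon^{m - k - 1}\psi_0$ to get $\sup|\mathcal{L}_\varepsilon^{m-k}\psi_0| \le C$, and then the smoothing estimate $k$ more times:
\[
  \sup\bigl|(\mathcal{L}_\varepsilon^m \psi_0)^{(k)}\bigr| \le \frac{C^{k+1}}{\lambda^{1 + 2 + \cdots + k}} = \frac{C^{k+1}}{\lambda^{k(k+1)/2}} \le \lambda^{-\frac{(k+1)(k+2)}{2}} C^{k+1} ,
\]
uniformly in $m$ (the last step using $0 < \lambda < 1$). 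By Arzel\`a--Ascoli and a diagonal extraction, a subsequence of $(\mathcal{L}_\varepsilon^m \psi_0)_m$ converges in $\mathcal{C}^\infty(I)$; its limit is a density for $\mu$, belongs to $\mathcal{C}^\infty(I)$, and inherits the displayed bound, which is the assertion of the theorem.

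The step I expect to need the most care, and hence the main obstacle, is the legitimacy of the integration by parts in $t$ near the values of $t$ where $\tfrac{y - a_j - b_j t}{\lambda}$ leaves $I$: one must know that $t \mapsto \psi^{(k)}\!\left(\tfrac{y - a_j - b_j t}{\lambda}\right)$ is genuinely $\mathcal{C}^1$ on $[0, \varepsilon]$, so that no spurious jump terms intrude and $h(0)$, $h(\varepsilon)$ really are the only endpoint contributions. Concretely this amounts to controlling the geometry of the support of $\mu$ --- it is a finite union of intervals on each of which the density and all its derivatives extend continuously and vanish at the boundary --- and it is precisely this that a well-chosen cone, together with its $\mathcal{L}_\varepsilon$-invariance and the strict contraction of the associated Hilbert metric, is designed to deliver. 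Once that is in place, everything above is routine bookkeeping.
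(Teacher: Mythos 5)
Your route is genuinely different from the paper's: you smooth the transfer operator $\mathcal{L}_\varepsilon$ directly, trading a $y$-derivative for a $t$-derivative and integrating by parts against $h$, whereas the paper never differentiates $\mathcal{L}_\varepsilon^n 1$ head-on. Instead it dualises: writing $\int_{-1}^t\phi_n^{(k)}\,\mathrm{d}m=\int_I\psi_t\cdot(\mathcal{L}_\varepsilon^n1)^{(k)}\,\mathrm{d}m$ with $\psi_t=\chi_{[-1,t]}$, it moves all $k$ derivatives onto $\mathcal{U}_\varepsilon^{k+2}\psi_t$ via $\frac{\mathrm{d}}{\mathrm{d}x}\mathcal{U}_\varepsilon\psi=\lambda\,\mathcal{U}_\varepsilon\psi'$ and performs the identical integration by parts there. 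The point of doing it on $\mathcal{U}_\varepsilon$ is that $\mathcal{U}_\varepsilon\psi(x)=\sum_ip_i\int_0^\varepsilon\psi\circ f_{i,t}(x)h(t)\,\mathrm{d}t$ contains no characteristic functions: $f_{i,t}(x)$ never leaves $I$, so the change of variables and differentiation are unconditionally legitimate and the only boundary contributions are $h(0)$, $h(\varepsilon)$ and $\varepsilon\sup|h'|$. Your bookkeeping is otherwise consistent with the paper's (you even land on $\lambda^{-k(k+1)/2}C^{k+1}$, which is slightly sharper than the stated $\lambda^{-(k+1)(k+2)/2}C^{k+1}$), and your Arzel\`a--Ascoli extraction is a legitimate substitute for the cone construction of the limit.

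The genuine gap is exactly the step you flag, and your proposed repair does not work. For the integration by parts in $t$ you need $t\mapsto\psi^{(k)}\bigl(\frac{y-a_j-b_jt}{\lambda}\bigr)$, with $\psi^{(k)}$ extended by zero outside $I$, to be absolutely continuous on $[0,\varepsilon]$; this fails with a jump of size $\psi^{(k)}(\pm1)$ at the $t$ where the argument crosses $\partial I$, and already your preliminary identity $(\mathcal{L}_\varepsilon\psi)^{(k)}=\lambda^{-(k+1)}\sum_jp_j\int_0^\varepsilon\psi^{(k)}(\cdot)h\,\mathrm{d}t$ silently assumes $\psi^{(j)}(\pm1)=0$ for $j<k$. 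What you therefore need, inductively, is that the iterates $\mathcal{L}_\varepsilon^j\psi_0$ lie in $\mathcal{C}_0^j(I)$ (derivatives up to order $j$ vanish at the endpoints of $I$). This is \emph{not} something the Birkhoff-cone machinery delivers: the cone $\mathcal{E}(a,b,\gamma)$ consists of bounded functions satisfying a purely integral condition against densities in $\mathcal{D}(a,\gamma)$, and contraction of the Hilbert metric gives convergence of $\int\psi\,\mathcal{L}_\varepsilon^n1\,\mathrm{d}m$, i.e.\ weak convergence of the measures --- it says nothing about pointwise regularity or boundary values of derivatives. In the paper the corresponding input is the separate mapping property $\mathcal{L}_\varepsilon\colon\mathcal{C}^{n-1}(I)\to\mathcal{C}_0^n(I)$, which is used only to push derivatives through $\mathcal{L}_\varepsilon^l$ and to kill the boundary terms in the integration by parts over $I$; the quantitative smoothing estimate lives entirely on $\mathcal{U}_\varepsilon$, where the problem does not arise. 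To close your argument you must either prove that mapping property of $\mathcal{L}_\varepsilon$ directly (a statement about the operator, not the cone) or switch to the dual operator as the paper does.
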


To prove this Theorem we will use of the method of contractions in cone metrics introduced by Liverani in \cite{Liverani}. This method was further developed for different hyperbolic dynamical systems by Viana in \cite{Viana}. The method of this paper is very much inspired by \cite{Viana}. However some differences are present, since we allow the images of the different function to overlap.
We use this method to construct the invariant measure as a fixed point of the transfer operator. This first part is valid for $\varepsilon \geq 0$. After this we assume that $\varepsilon > 0$ and prove that the density of the invariant measure is smooth.

In \cite{BaranyPersson}, the author and B. B\'ar\'any studied certain iterated function systems of similar type as in this paper, with $h(t) = 1/\varepsilon$. It was proved that the $L_2$-norm of the density does not grow faster than $1/\sqrt{\varepsilon}$ as $\varepsilon$ vanishes. For this case, the estimate in Theorem~\ref{the:theorem} provides us with $\sup \phi \leq c/\varepsilon$, where $c$ is some constant. This together with $\int_I \phi (x) \, \mathrm{d}x = 1$ imply that the $L_2$-norm does not grow faster than $1/\sqrt{\varepsilon}$, just as in \cite{BaranyPersson}.

\section{A transfer operator}

For $\varepsilon > 0$, we define the transfer operator $\L$ associated to the random iterated function system defined by
\begin{equation} \label{eq:Ldef}
  \L \phi (y) = \sum_{i = 1}^n \frac{p_i}{\lambda} \int_0^\varepsilon \phi \circ f_{i, t}^{-1} (y) \chi_{f_{i, t} (I)} (y) h(t) \, \mathrm{d}t,
\end{equation}
where $\phi \circ f_{i, t}^{-1} (y) \chi_{f_{i, t} (I)} (y)$ is defined to be zero if $y \not \in f_{i, t} (I)$. For $\varepsilon = 0$ the operator $\mathcal{L}_0$ is defined similarly, but without the integral over $t$.

We also introduce the operator $\U$ for $\varepsilon > 0$, defined by
\[
  \U \psi (x) = \sum_{i = 1}^n p_i \int_0^\varepsilon \psi \circ f_{i, t} (x) h (t) \, \mathrm{d}t.
\]
Similarly as above, we can also define the operator $\mathcal{U}_0$. For the proof of Theorem~\ref{the:theorem} we will not use the operators $\mathcal{L}_0$ and $\mathcal{U}_0$. However, most of what is done with the operators $\L$ and $\U$ is also valid for $\mathcal{L}_0$ and $\mathcal{U}_0$.

The operators $\L$ and $\U$ are related according to the following lemma. We introduce the symbol $m$ to denote the normalised Lebesgue measure on the interval $I$.

\begin{lemma} \label{lem:L-U}
  If $\phi$ and $\psi$ are continuous functions on $I$, then
  \[
    \int_I \psi \cdot \L \phi \, \mathrm{d}m = \int_I \U \psi \cdot \phi \, \mathrm{d}m. \qedhere
  \]
\end{lemma}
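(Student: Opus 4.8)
The plan is a direct computation: expand the left-hand side using the definition \eqref{eq:Ldef} of $\L$, pull the finite sum and the $t$-integral outside by Fubini's theorem (legitimate since $\phi$, $\psi$ and $h$ are continuous and every integration is over a compact set, so the integrand is bounded), and then carry out the affine change of variables $y = f_{i,t}(x)$ in the remaining integral over $I$. Concretely, I would first write
\[
  \int_I \psi \cdot \L \phi \, \mathrm{d}m = \sum_{i=1}^n \frac{p_i}{\lambda} \int_0^\varepsilon h(t) \biggl( \int_I \psi(y)\, \phi\bigl(f_{i,t}^{-1}(y)\bigr)\, \chi_{f_{i,t}(I)}(y)\, \mathrm{d}m(y) \biggr)\, \mathrm{d}t.
\]

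For fixed $i$ and $t$, substitute $y = f_{i,t}(x)$ in the inner integral. Since $f_{i,t}$ is affine with slope $\lambda$ and maps $I$ bijectively onto $f_{i,t}(I) \subset I$, the factor $\chi_{f_{i,t}(I)}(y)$ equals $1$ exactly when $x \in I$, and $\mathrm{d}m(y) = \lambda \, \mathrm{d}m(x)$ (the normalising factor $\tfrac12$ cancels). Hence the inner integral equals $\lambda \int_I \psi\bigl(f_{i,t}(x)\bigr)\, \phi(x)\, \mathrm{d}m(x)$, the two factors $\lambda$ cancel, and what remains is
\[
  \sum_{i=1}^n p_i \int_0^\varepsilon h(t) \int_I \psi\circ f_{i,t}(x)\, \phi(x)\, \mathrm{d}m(x)\, \mathrm{d}t.
\]
A second application of Fubini moves $\phi(x)\,\mathrm{d}m(x)$ to the outside, and the bracketed expression $\sum_{i=1}^n p_i \int_0^\varepsilon \psi\circ f_{i,t}(x) h(t)\, \mathrm{d}t$ is precisely $\U\psi(x)$, giving $\int_I \U\psi \cdot \phi\, \mathrm{d}m$.

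There is essentially no genuine obstacle here beyond bookkeeping; the only point deserving a word of care is the sign of $\lambda$, and I would simply note that \eqref{eq:Ldef} is written under the convention $\lambda>0$, since for $\lambda<0$ the orientation-reversing substitution produces a compensating sign and one reads $|\lambda|$ throughout. The same computation, with the $t$-integral suppressed, gives the corresponding identity for $\mathcal{L}_0$ and $\mathcal{U}_0$.
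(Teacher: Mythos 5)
Your proof is correct and is essentially identical to the paper's own argument: the paper also expands $\L$ by its definition, applies Fubini, and performs the affine change of variables $y = f_{i,t}(x)$ so that the factors of $\lambda$ cancel. Your added remark about reading $|\lambda|$ when $\lambda<0$ is a fair point of care that the paper glosses over, but it does not change the substance.
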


Lemma~\ref{lem:L-U} shows that $\L$ is related to invariant densities of the iterated function system in the following way. Suppose that we can define a measure $\mu$ as the weak limit of measures $\mu_{ n}$ defined by
\[
  \int_I \psi \, \mathrm{d} \mu_{ n} = \int_I \psi \cdot \L^n 1 \, \mathrm{d}m.
\]
Then $\mu (I) = 1$ and $\mu$ is invariant since
\[
  \mu (I) = \lim_{n \to \infty} \int_I 1 \cdot \L^n 1 \, \mathrm{d}m = \lim_{n \to \infty} \int_I \U^n 1 \, \mathrm{d}m = 1,
\]
and for any measurable $E \subset I$
\begin{multline*}
  \mu (E) = \lim_{n \to \infty} \int_I \chi_E \L^n 1 \, \mathrm{d}m = \lim_{n \to \infty} \int_I \U \chi_E \L^{n-1} 1 \, \mathrm{d}m \\
  = \lim_{n \to \infty} \int_I \sum_{i = 1}^n p_i \int_0^\varepsilon \chi_{f_{i, \varepsilon}^{-1} (E)} h(t) \, \mathrm{d}t \, \L^{n-1} 1 \, \mathrm{d}m \\
  = \sum_{i = 1}^n p_i \int_0^\varepsilon \mu (f_{i, t}^{-1} (E)) h(t) \, \mathrm{d}t.
\end{multline*}

\begin{proof}[Proof of Lemma~\ref{lem:L-U}]
  The lemma is a simple consequence of the formula for change of variable and Fubini's theorem. Indeed,
  \begin{align*}
    \int_I \psi \cdot \L \phi \, \mathrm{d} m &= \int_I \psi (y) \sum_{i = 1}^n \frac{p_i}{\lambda} \int_0^\varepsilon \phi \circ f_{i, t}^{-1} (y) \chi_{f_{i, t} (I)} (y) h(t) \, \mathrm{d}t \mathrm{d}y = \\
    &= \int_0^\varepsilon \sum_{i = 1}^n \int_I \frac{p_i}{\lambda} \psi (y) \phi \circ f_{i, t}^{-1} (y) \chi_{f_{i, t} (I)} (y) h (t) \, \mathrm{d}y \mathrm{d}t = \\
    &= \int_0^\varepsilon \sum_{i = 1}^n \int_I p_i \psi \circ f_{i, t} (x) \phi (x) h(t) \, \mathrm{d}x \mathrm{d}t 
    = \int_I \U \psi \cdot \phi \, \mathrm{d}m.
  \end{align*}
\end{proof}

\section{Cones and the Hilbert metric}

For the theory in this section we refer to \cite{Birkhoff}, \cite{Viana} and \cite{Liverani}.

Let $E$ be a vector space. (We will later let $E$ be function spaces on $I$.) A convex cone $C \subset E$ is a set such that
\[
  v_1, v_2 \in C \ \mathrm{and} \ t_1, t_2 > 0 \quad \Rightarrow \quad t_1 v_1 + t_2 v_2 \in C.
\]
Assume that $(- \overline{C}) \cap \overline{C} = \{0 \}$. Define
\begin{align*}
  \alpha (v_1, v_2) &= \sup \{ \, t > 0 : v_2 - t v_1 \in C \, \}, \\
  \beta (v_1, v_2) &= \inf \{ \, s > 0 : s v_1 - v_2 \in C \, \}.
\end{align*}
Let
\[
  \theta_C (v_1, v_2) = \log \frac{\beta (v_1, v_2) }{ \alpha (v_1, v_2) }.
\]
Then $\theta_C$ is a metric on the quotient $C / \sim$, where $\sim$ is the equivalent relation $v_1 \sim v_2$ if and only if there exists a $t > 0$ such that $v_1 = t v_2$.
The metric $\theta_C$ is called the projective metric or the Hilbert metric of the cone $C$.

We have the following remarkable theorem by G. Birkhoff.

\begin{theorem} \label{the:contraction}
  Let $\mathcal{L}$ be a linear operator and $C_1$ and $C_2$ two convex cones with $(-C_1) \cap C_1 = \{0\}$ and $(-C_2) \cap C_2 = \{0\}$, such that $\mathcal{L} (C_1) \subset C_2$. If $D = \sup \{\, \theta_{C_2} (\mathcal{L} (v_1), \mathcal{L} (v_2)) : v_1, v_2 \in C_1 \, \}$ is finite then
  \[
    \theta_{C_2} (\mathcal{L} (v_1), \mathcal{L} (v_2)) \leq \tanh \Bigl( \frac{D}{4} \Bigr) \theta_{C_1} (v_1, v_2)
  \]
  for all $v_1, v_2 \in C_1$.
\end{theorem}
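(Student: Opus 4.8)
This is Birkhoff's classical theorem, and the plan is to reduce it to a one-variable optimisation inside a two-dimensional subcone. First I would record the monotonicity of $\mathcal{L}$: since it is linear and $\mathcal{L}(C_1)\subset C_2$, the relation $v_2 - t v_1\in C_1$ forces $\mathcal{L}v_2 - t\mathcal{L}v_1=\mathcal{L}(v_2-tv_1)\in C_2$, so that $\alpha_{C_2}(\mathcal{L}v_1,\mathcal{L}v_2)\ge\alpha_{C_1}(v_1,v_2)$ and, symmetrically, $\beta_{C_2}(\mathcal{L}v_1,\mathcal{L}v_2)\le\beta_{C_1}(v_1,v_2)$; this already gives the non-strict contraction, and the substance of the theorem is the improvement by the factor $\tanh(D/4)$. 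It also lets us assume $0<a:=\alpha_{C_1}(v_1,v_2)<b:=\beta_{C_1}(v_1,v_2)<\infty$, since $\theta_{C_1}(v_1,v_2)\in\{0,\infty\}$ is trivial.

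Next I would set up the reduction. Fixing $t<a<b<s$, both $v_2-tv_1$ and $sv_1-v_2$ lie in $C_1$, so $w_1:=\mathcal{L}(v_2-tv_1)$ and $w_2:=\mathcal{L}(sv_1-v_2)$ lie in $\mathcal{L}(C_1)$ and hence $\theta_{C_2}(w_1,w_2)\le D$; a short computation gives $\mathcal{L}v_1=\tfrac1{s-t}(w_1+w_2)$ and $\mathcal{L}v_2=\tfrac1{s-t}(sw_1+tw_2)$ (the case where $w_1,w_2$ are proportional being trivial). I would then compute $\alpha_{C_2}$ and $\beta_{C_2}$ of the pair $(w_1+w_2,\,sw_1+tw_2)$ explicitly, by expressing $(sw_1+tw_2)-\tau(w_1+w_2)$ as a combination of $w_1$ and $w_2$ and testing membership in $C_2$ through the ratio of its coefficients against $\hat a:=\alpha_{C_2}(w_1,w_2)$ and $\hat\alpha:=\alpha_{C_2}(w_2,w_1)$, which are positive and finite with $\hat a\hat\alpha=e^{-\theta}$, $\theta:=\theta_{C_2}(w_1,w_2)$. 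This produces $\alpha_{C_2}=\tfrac{t+\hat\alpha s}{1+\hat\alpha}$ and $\beta_{C_2}=\tfrac{s+\hat a t}{1+\hat a}$, hence
\[
  \theta_{C_2}(\mathcal{L}v_1,\mathcal{L}v_2)=\log\frac{(s+\hat a t)(1+\hat\alpha)}{(1+\hat a)(t+\hat\alpha s)}.
\]

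Finally I would carry out the optimisation. Normalising $t=1$, $s=\rho$ and inserting $\hat\alpha=e^{-\theta}/\hat a$, the right-hand side becomes $\log F(\hat a)$ with $F(x)=\dfrac{(\rho+x)(x+e^{-\theta})}{(1+x)(x+\rho e^{-\theta})}$. On differentiating, the cross terms cancel and the critical-point equation reduces to $x^{2}=\rho e^{-\theta}$, which is the maximum; evaluating $F$ there and writing $\sqrt\rho=e^{p}$ with $p=\tfrac12\log\rho$ gives the identity $F=\bigl(\cosh(\tfrac{p+\theta/2}{2})/\cosh(\tfrac{p-\theta/2}{2})\bigr)^{2}$. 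It then remains to check the elementary inequality $\log\cosh\tfrac{p+q}{2}-\log\cosh\tfrac{p-q}{2}\le p\tanh\tfrac q2$ for $p,q\ge0$: the left side vanishes at $p=0$, has derivative $\tanh\tfrac q2$ there, and is concave in $p$ on $[0,\infty)$ because its second derivative $\tfrac14\bigl(\cosh^{-2}\tfrac{p+q}{2}-\cosh^{-2}\tfrac{p-q}{2}\bigr)$ is nonpositive. With $q=\theta/2$ this yields $\theta_{C_2}(\mathcal{L}v_1,\mathcal{L}v_2)\le(\log\rho)\tanh\tfrac\theta4\le\bigl(\log\tfrac st\bigr)\tanh\tfrac D4$, and letting $t\uparrow a$, $s\downarrow b$ sends $\log\tfrac st$ to $\log\tfrac ba=\theta_{C_1}(v_1,v_2)$.

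I expect the main obstacle to be this last paragraph: that the constant comes out as exactly $\tanh(D/4)$ hinges on the unexpectedly clean collapse of the stationarity condition to $x^2=\rho e^{-\theta}$ and on the $\cosh$-ratio identity it produces. One must also take some care with the limiting passage $t\uparrow a$, $s\downarrow b$ --- working with strict inequalities $t<a<b<s$ is precisely what keeps the vectors $v_2-tv_1$ and $sv_1-v_2$ inside $C_1$ rather than only in its closure, which is what makes $\theta_{C_2}(w_1,w_2)\le D$ legitimate. The bookkeeping of coefficient ratios in the second paragraph is otherwise routine.
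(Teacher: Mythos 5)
The paper does not prove Theorem~\ref{the:contraction} at all --- it only refers the reader to \cite{Birkhoff}, \cite{Viana} and \cite{Liverani} --- so there is no in-paper argument to compare against. Your proposal is the standard projective-metric proof from exactly those sources, and it is correct: the reduction to the pair $w_1=\mathcal{L}(v_2-tv_1)$, $w_2=\mathcal{L}(sv_1-v_2)$ with $\mathcal{L}v_1=\tfrac{1}{s-t}(w_1+w_2)$, $\mathcal{L}v_2=\tfrac{1}{s-t}(sw_1+tw_2)$ is right; the formulas $\alpha_{C_2}=\tfrac{t+\hat\alpha s}{1+\hat\alpha}$, $\beta_{C_2}=\tfrac{s+\hat a t}{1+\hat a}$ check out (using that $\{\tau>0: w_1-\tau w_2\in C_2\}$ is a down-set, so membership really is governed by the coefficient ratio); the stationarity condition does collapse to $x^2=\rho e^{-\theta}$, the value at the critical point is $\bigl((e^p+e^{-q})/(1+e^{p-q})\bigr)^2=\bigl(\cosh\tfrac{p+q}{2}/\cosh\tfrac{p-q}{2}\bigr)^2$ with $q=\theta/2$, and the concavity argument for $\log\cosh\tfrac{p+q}{2}-\log\cosh\tfrac{p-q}{2}\le p\tanh\tfrac q2$ is sound. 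The only hypothesis you lean on implicitly is $D<\infty$ to guarantee $0<\hat a\le\hat b<\infty$, and you do flag the limiting passage $t\uparrow a$, $s\downarrow b$ correctly; I see no gap.
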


For a proof of this theorem, see \cite{Birkhoff}, \cite{Viana} and/or \cite{Liverani}.

We note that $C / \sim$ endowed with the projective metric need not be a complete space, so the limit $\lim_{n \to \infty} \mathcal{L}^n v$ may not exist, although the sequence $( \mathcal{L}^n v)_{n = 0}^\infty$ is Cauchy by Theorem~\ref{the:contraction}.

\section{The operators $\L$ and $\U$}

We let $\mathcal{C}^n (I)$ be the set of $n$ times continuously differentiable functions on $I$, and $\mathcal{C}_0^n (I)$ be the set of functions in $\mathcal{C}^n (I)$ such that the derivative of order $k$ vanishes at the endpoints of $I$ for $0 \leq k \leq n$. One observes immediately that for $\varepsilon > 0$
\[
  \L \colon \mathcal{C}^{n-1} (I) \to \mathcal{C}_0^{n} (I), \quad n \in \mathbb{N},
\]
and
\[
  \U \colon \mathcal{C}^{n-1} (I) \to \mathcal{C}^{n} (I), \quad n \in \mathbb{N}.
\]

Moreover, if $\phi \in \mathcal{C}_0^1 (I)$ and $\psi \in \mathcal{C}^1 (I)$ then
\[
  \frac{\mathrm{d}}{\mathrm{d} y} \L \phi (y) = \frac{1}{\lambda} \L \phi' (y) \quad \mathrm{and} \quad \frac{\mathrm{d}}{\mathrm{d} x} \U \psi (x) = \lambda \U \psi' (x).
\]

These properties will be used to prove that the invariant measure has density in $\mathcal{C}^\infty$. Before we do this we must first prove that the measures $\mu_{ n}$ defined above, converges weakly to some measure $\mu$.

\section{Cones}

We will introduce a cone of densities on which $\L$ operates, and construct the measure $\mu$ mentioned above as a weak limit of $\L^n$ operating on these densities.
This constructions follows very closely to the construction in chapter~4 of \cite{Viana}. We do however include all proofs for completeness. There are some differences, in particular in the proof of Lemma~\ref{lem:finiteEdiam}, due to the overlapping images.

This first part of the construction is valid also for $\varepsilon = 0$. After constructing $\mu$ as a weak limit, we restrict to the case of positive $\varepsilon$ and show that $\mu$ is absolutely continuous with respect to Lebesgue measure, with density in $\mathcal{C}^\infty$.

We introduce the cone
\[¨
  \D = \{\, \rho \in \mathcal{C} (I) : \rho > 0 \ \mathrm{and} \ \log \rho \ \mathrm{is} \ (a, \gamma)\text{-H\"older} \,\}.
\]
That a function $\phi$ is $(a, \gamma)$-H\"older, means that $| \phi (x) - \phi(y) | \leq a |x - y|^\gamma$, for all $x$ and $y$.

Let us find the cone metric of $\D$. Let $\rho_1, \rho_1 \in \D$ and $t > 0$. By
\[
  \rho_2 - t \rho_1 > 0 \quad \Leftrightarrow \quad t < \frac{\rho_2}{\rho_1},
\]
\[
  \frac{ (\rho_2 - t \rho_1) (x)}{ (\rho_2 - t \rho_1 ) (y) } \leq e^{a |x - y|^\gamma} \quad \Leftrightarrow \quad t \leq \frac{ e^{a |x - y|^\gamma} \rho_2 (y) - \rho_2 (x) }{ e^{a |x - y|^\gamma} \rho_1 (y) - \rho_1 (x) }
\]
and
\[
  \frac{ (\rho_2 - t \rho_1) (x) }{ (\rho_2 - t \rho_1) (y) } \geq e^{ - a |x - y|^\gamma } \quad \Leftrightarrow \quad t \leq \frac{ e^{a |x - y|^\gamma} \rho_2 (x) - \rho_2 (y) }{ e^{a |x - y|^\gamma} \rho_1 (x) - \rho_1 (y) },
\]
we get that
\[
  \alpha_\mathcal{D} (\rho_1, \rho_2) = \inf_{x \ne y} \biggl\{ \frac{\rho_2 (x)}{\rho_1 (x)}, \frac{ e^{a |x - y|^\gamma} \rho_2 (y) - \rho_2 (x) }{ e^{a |x - y|^\gamma} \rho_1 (y) - \rho_1 (x) } \biggr\}.
\]
Similarly we get that
\[
  \beta_\mathcal{D} (\rho_1, \rho_2) = \sup_{x \ne y} \biggl\{ \frac{\rho_2 (x)}{\rho_1 (x)}, \frac{ e^{a |x - y|^\gamma} \rho_2 (y) - \rho_2 (x) }{ e^{a |x - y|^\gamma} \rho_1 (y) - \rho_1 (x) } \biggr\}.
\]
Hence
\[
  \theta_{\mathcal{D}} (\rho_1, \rho_2) = \log \frac{ { \displaystyle \sup_{x \ne y} } \biggl\{ \frac{\rho_2 (x)}{\rho_1 (x)}, \frac{ e^{a |x - y|^\gamma} \rho_2 (y) - \rho_2 (x) }{ e^{a |x - y|^\gamma} \rho_1 (y) - \rho_1 (x) } \biggr\} }{ { \displaystyle \inf_{x \ne y} } \biggl\{ \frac{\rho_2 (x)}{\rho_1 (x)}, \frac{ e^{a |x - y|^\gamma} \rho_2 (y) - \rho_2 (x) }{ e^{a |x - y|^\gamma} \rho_1 (y) - \rho_1 (x) } \biggr\} }
\]
is the projective metric on $\D$.

\begin{lemma} \label{lem:contractiononD}
  There exists a number $\lambda_0 < 1$ such that
  \begin{itemize}
    \item[{\em i)}] $\U \colon \mathcal{D} (a \gamma) \to \mathcal{D} (\lambda^\gamma a, \gamma)$,

    \item[{\em ii)}] if $\rho_1, \rho_2 \in \D$ then $\theta_\mathcal{D} (\U \rho_1, \U \rho_2) \leq \lambda_0 \theta_\mathcal{D} (\rho_1, \rho_2)$.
  \end{itemize}
\end{lemma}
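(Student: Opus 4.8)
The plan is to establish part i) by a direct computation on the Hölder seminorm of $\log \U \rho$, and then to deduce part ii) from Birkhoff's Theorem~\ref{the:contraction} together with the fact that $\U$ maps $\D$ into a cone $\mathcal{D}(\lambda^\gamma a,\gamma)$ that sits \emph{strictly inside} $\D$, so that its $\theta_\mathcal{D}$-diameter is finite. For part i), first write $\U \rho(x) = \sum_i p_i \int_0^\varepsilon \rho(f_{i,t}(x)) h(t)\,\mathrm{d}t$, which is a convex combination (with weights summing to~$1$) of the functions $x \mapsto \rho(f_{i,t}(x))$. Since each $f_{i,t}$ is affine with slope $\lambda$, if $\log\rho$ is $(a,\gamma)$-Hölder then $\log\bigl(\rho\circ f_{i,t}\bigr)$ is $(|\lambda|^\gamma a,\gamma)$-Hölder. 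The key elementary fact is that a convex combination of positive functions whose logarithms are all $(a',\gamma)$-Hölder again has logarithm that is $(a',\gamma)$-Hölder: indeed if $u_j>0$ with $u_j(x)\le e^{a'|x-y|^\gamma} u_j(y)$ for all $j$, then $\sum_j c_j u_j(x) \le e^{a'|x-y|^\gamma}\sum_j c_j u_j(y)$, and the reverse inequality symmetrically. Taking the (uncountable) convex combination over $i$ and $t$ in the same way gives $\log \U\rho$ is $(|\lambda|^\gamma a,\gamma)$-Hölder, which is i). (One should also check $\U\rho$ is continuous and strictly positive, which is immediate.)

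For part ii), the point is that $\U(\D) \subset \mathcal{D}(\lambda^\gamma a,\gamma)$, and $\mathcal{D}(\lambda^\gamma a,\gamma)$ is contained in $\D$ with room to spare because $\lambda^\gamma < 1$. By Birkhoff's theorem it suffices to show that
\[
  D := \sup\{\, \theta_\mathcal{D}(\U\rho_1,\U\rho_2) : \rho_1,\rho_2 \in \D \,\} < \infty,
\]
and then $\lambda_0 = \tanh(D/4) < 1$ works, since $\U\rho_i \in \mathcal{D}(\lambda^\gamma a,\gamma) \subset \D$. To bound $D$, I would bound the $\theta_\mathcal{D}$-distance between any two elements of $\mathcal{D}(\lambda^\gamma a,\gamma)$. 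Using the explicit formula for $\theta_\mathcal{D}$, each of the two candidate expressions inside the $\sup$ and $\inf$ can be estimated: the ratios $\rho_2(x)/\rho_1(x)$ are controlled because both $\log\rho_j$ are $(\lambda^\gamma a,\gamma)$-Hölder on the bounded interval $I=[-1,1]$ (so $\log\rho_j$ oscillates by at most $\lambda^\gamma a\, 2^\gamma$), and the second expression
\[
  \frac{ e^{a |x - y|^\gamma} \rho_2 (y) - \rho_2 (x) }{ e^{a |x - y|^\gamma} \rho_1 (y) - \rho_1 (x) }
\]
has numerator and denominator both bounded above and below by positive multiples of, say, $\rho_j(y)$, using that $\rho_j(x)/\rho_j(y) \le e^{\lambda^\gamma a|x-y|^\gamma} < e^{a|x-y|^\gamma}$ strictly — this strict inequality is exactly what makes the denominator bounded away from $0$ and keeps the whole ratio in a compact subinterval of $(0,\infty)$. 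Combining these bounds gives a finite $D$ depending only on $a$, $\gamma$, $\lambda$.

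The main obstacle I anticipate is the bound on $D$, specifically getting the denominator $e^{a|x-y|^\gamma}\rho_1(y) - \rho_1(x)$ bounded \emph{below} uniformly by a positive multiple of $\rho_1(y)$: this requires the strict gap between $\lambda^\gamma a$ and $a$, and one must be careful that the bound does not degenerate as $|x-y|\to 0$ (there the numerator and denominator of the second expression both go to $0$, and one should verify that in that regime the first expression $\rho_2(x)/\rho_1(x)$ is the binding one, or else take a limit carefully — in fact as $|x-y|\to 0$ the ratio tends to $\rho_2(y)/\rho_1(y)$ by l'Hôpital-type reasoning, which is again controlled). This is precisely the step the authors flag as differing from \cite{Viana} because of the overlapping images, though here it enters through the structure of the cone rather than through $\U$ itself; the overlap-sensitive estimate will reappear when bounding the diameter of $\E$ in Lemma~\ref{lem:finiteEdiam}. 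Once $D<\infty$ is in hand, part ii) is just a one-line application of Theorem~\ref{the:contraction}.
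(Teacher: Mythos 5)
Your proposal is correct and follows essentially the same route as the paper: part i) via the observation that each $\rho\circ f_{i,t}$ has $(\lambda^\gamma a,\gamma)$-H\"older logarithm and that this property is preserved under positive (convex) combinations, and part ii) by applying Theorem~\ref{the:contraction} after bounding the $\theta_\mathcal{D}$-diameter of $\mathcal{D}(\lambda^\gamma a,\gamma)$ inside $\D$, using exactly the strict gap between $\lambda^\gamma a$ and $a$ to keep the ratio $\bigl(e^{a|x-y|^\gamma}\rho_2(y)-\rho_2(x)\bigr)/\bigl(e^{a|x-y|^\gamma}\rho_1(y)-\rho_1(x)\bigr)$ within a fixed multiple of $\rho_2(y)/\rho_1(y)$ uniformly as $|x-y|\to 0$ (the paper's $\inf_{z>1}(z-z^{\lambda})/(z-z^{-\lambda})=(1-\lambda)/(1+\lambda)$ computation). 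The non-degeneration issue you flag as the main obstacle is handled in the paper by precisely this infimum, yielding the explicit $\lambda_0=\tanh\bigl(\tfrac12\log\tfrac{1+\lambda}{1-\lambda}+2^{\gamma-1}\lambda a\bigr)$.
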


\begin{proof}
  \noindent {\em i)} \quad We show that if $\rho \in \D$, then $\rho \circ f_i \in \mathcal{D} (\lambda^\gamma a, \gamma)$. Since $\mathcal{D} (\lambda^\gamma a, \gamma)$ is a cone, this implies that $\U \rho \in \mathcal{D} (\lambda^\gamma a, \gamma)$. Clearly,
  \[
    | \log \rho_1 ( f_i (x) ) - \log \rho_1 ( f_i (y) ) | \leq a | f_i (x) - f_i(y) |^\gamma = \lambda^\gamma a | x - y |^\gamma. 
  \]
  It is also clear that $\rho \circ f_i > 0$. This proves that $\rho \circ f_i \in \mathcal{D} (\lambda^\gamma a, \gamma)$.

  {\em ii)} \quad By Theorem~\ref{the:contraction} it is sufficient to show that $\mathcal{D} (\lambda a, \gamma)$ has finite diameter in $\D$, in order to conclude statement {\em ii)}. Let $\rho_1, \rho_2 \in \mathcal{D} (\lambda a, \gamma)$. Then
  \begin{multline*}
    \frac{ e^{a |x - y|^\gamma} \rho_2 (y) - \rho_2 (x) }{ e^{a |x - y|^\gamma} \rho_1 (y) - \rho_1 (x) } \geq \frac{\rho_2 (y)}{\rho_1 (y)} \frac{ e^{a |x - y|^\gamma} - e^{ \lambda a |x - y|^\gamma } }{ e^{a |x - y|^\gamma} - e^{ -\lambda a |x - y|^\gamma } } \\
    \geq \frac{\rho_2 (y)}{\rho_1 (y)} \inf_{z > 1} \frac{ z - z^\lambda }{ z - z^{-\lambda} } = \frac{\rho_2 (y)}{\rho_1 (y)} \frac{1 - \lambda}{1 + \lambda}.
  \end{multline*}
  Similarly we get
  \[
    \frac{ e^{a |x - y|^\gamma} \rho_2 (y) - \rho_2 (x) }{ e^{a |x - y|^\gamma} \rho_1 (y) - \rho_1 (x) } \leq \frac{\rho_2 (y)}{\rho_1 (y)} \frac{1 + \lambda}{1 - \lambda}.
  \]
  This implies that
  \begin{align*}
    \theta_{\mathcal{D}} (\rho_1, \rho_2) &\leq 2 \log \frac{1 + \lambda}{1 - \lambda} + \log \sup_{x \in I} \frac{\rho_2 (x)}{\rho_1 (x)} - \log \inf_{x \in I} \frac{\rho_2 (x)}{\rho_1 (x)} \\
    &\leq 2 \log \frac{1 + \lambda}{1 - \lambda} + 2 \lambda a |I|^\gamma = 2 \log \frac{1 + \lambda}{1 - \lambda} + 2^{1+\gamma} \lambda a.
  \end{align*}
  By Theorem~\ref{the:contraction} we can take $\lambda_0 = \tanh \bigl( \frac{1}{2} \log \frac{1 + \lambda}{1 - \lambda} + 2^{\gamma-1} \lambda a \bigr)$.
\end{proof}

We will now use the cone $\D$ to define a cone $\E$ on which we will let $\L$ operate. Let
\begin{multline*}
  \E = \biggl\{\, \phi \mathrm{\ is\ bounded} : \int_I \phi \rho \, \mathrm{d}m > 0 \ \mathrm{for} \ \mathrm{all} \ \rho \in \D, \\ \mathrm{and} \ \biggl| \log \frac{ \int_I \phi \rho_1 \, \mathrm{d}m }{ \int_I \rho_1 \, \mathrm{d}m } - \log \frac{ \int_I \phi \rho_2 \, \mathrm{d}m }{ \int_I \rho_2 \, \mathrm{d}m } \biggr| < b \theta(\rho_1, \rho_2) \, \biggr\}.
\end{multline*}
Note that $\phi \in \E$ does not need to be positive.
The set $\E$ is a convex cone according to the following lemma.

\begin{lemma}
  $\E$ is a convex cone and $ (- \overline{\E} ) \cap \overline{\E} = \{ 0 \}$.
\end{lemma}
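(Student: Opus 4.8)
The plan is to treat the two assertions in turn. For convexity it suffices to check that each of the three conditions defining $\E$ --- boundedness, positivity of $\int_I \phi \rho \, \mathrm{d}m$ for every $\rho \in \D$, and the logarithmic H\"older-type bound (which is to be read as a constraint on pairs of non-equivalent $\rho_1, \rho_2$ only, since for $\rho_1 \sim \rho_2$ the two ratios coincide and there is nothing to require) --- is preserved when $\phi$ is replaced by $t_1 \phi_1 + t_2 \phi_2$ with $t_1, t_2 > 0$ and $\phi_1, \phi_2 \in \E$. The first two are immediate, since $\int_I (t_1\phi_1 + t_2\phi_2)\rho\,\mathrm{d}m = t_1 \int_I \phi_1\rho\,\mathrm{d}m + t_2 \int_I \phi_2\rho\,\mathrm{d}m > 0$ for every $\rho \in \D$. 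For the third I would fix $\rho_1, \rho_2 \in \D$ with $\theta(\rho_1, \rho_2) > 0$ and write $P_j = \int_I \phi_j \rho_1 \, \mathrm{d}m / \int_I \rho_1 \, \mathrm{d}m$ and $Q_j = \int_I \phi_j \rho_2 \, \mathrm{d}m / \int_I \rho_2 \, \mathrm{d}m$ for $j = 1, 2$; these are positive and satisfy $e^{-b\theta(\rho_1,\rho_2)} < P_j/Q_j < e^{b\theta(\rho_1,\rho_2)}$ by hypothesis. The key observation is the identity
\[
  \frac{t_1 P_1 + t_2 P_2}{t_1 Q_1 + t_2 Q_2} = \frac{t_1 Q_1}{t_1 Q_1 + t_2 Q_2} \cdot \frac{P_1}{Q_1} + \frac{t_2 Q_2}{t_1 Q_1 + t_2 Q_2} \cdot \frac{P_2}{Q_2},
\]
which displays the ratio associated with $t_1\phi_1 + t_2\phi_2$ as a convex combination of $P_1/Q_1$ and $P_2/Q_2$; it therefore again lies strictly between $e^{-b\theta(\rho_1,\rho_2)}$ and $e^{b\theta(\rho_1,\rho_2)}$, which is exactly the bound required of $t_1\phi_1 + t_2\phi_2$. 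Hence $\E$ is a convex cone.

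For the second assertion I would first record what membership in $\overline{\E}$ entails. Working in the space of bounded functions on $I$ with the topology of uniform convergence --- any topology making $\phi \mapsto \int_I \phi \rho \, \mathrm{d}m$ continuous for each fixed $\rho \in \D$ would serve equally well --- the estimate $|\int_I (\phi - \phi_k) \rho \, \mathrm{d}m| \leq \sup|\phi - \phi_k| \int_I \rho \, \mathrm{d}m$ shows that every $\phi \in \overline{\E}$ satisfies $\int_I \phi \rho \, \mathrm{d}m \geq 0$ for all $\rho \in \D$. Consequently, if $\phi \in (-\overline{\E}) \cap \overline{\E}$, applying this to both $\phi$ and $-\phi$ gives $\int_I \phi \rho \, \mathrm{d}m = 0$ for every $\rho \in \D$. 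It then remains to argue that $\D$ is rich enough to force $\phi = 0$: it contains the constant function $1$ (whose logarithm is trivially $(a,\gamma)$-H\"older), and for every $\psi \in \mathcal{C}^1(I)$ it contains $1 + \delta\psi$ for all sufficiently small $\delta > 0$, because then $\log(1+\delta\psi)$ is $\mathcal{C}^1$ with Lipschitz constant tending to $0$ as $\delta \to 0^+$, hence $(a,\gamma)$-H\"older once $\delta$ is small (using $|x-y| \leq |I|^{1-\gamma}|x-y|^\gamma$). Testing the relation $\int_I \phi \rho \, \mathrm{d}m = 0$ against $\rho = 1$ and $\rho = 1 + \delta\psi$ then yields $\int_I \phi \psi \, \mathrm{d}m = 0$ for every $\psi \in \mathcal{C}^1(I)$, and since $\mathcal{C}^1(I)$ is dense in $L^1(I, m)$ and $\phi$ is bounded, this forces $\phi = 0$. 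Therefore $(-\overline{\E}) \cap \overline{\E} = \{0\}$.

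The linearity computations and the convex-combination identity are routine bookkeeping; the one step that calls for a genuine, if brief, argument is the final one --- that $\int_I \phi \rho \, \mathrm{d}m = 0$ for all $\rho \in \D$ forces $\phi = 0$, equivalently that the linear span of $\D$ is dense in $L^1(I,m)$ --- and this is exactly what the explicit perturbations $1 + \delta\psi$ are for.
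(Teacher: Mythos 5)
Your proof is correct and follows essentially the same route as the paper: convexity is reduced to the fact that the mediant $(t_1P_1+t_2P_2)/(t_1Q_1+t_2Q_2)$ lies between $P_1/Q_1$ and $P_2/Q_2$ (your convex-combination identity is just a cleaner phrasing of the paper's case analysis), and the second assertion is reduced to showing that $\int_I \phi\rho\,\mathrm{d}m=0$ for all $\rho\in\D$ forces $\phi=0$ via an $L^1$-density argument. The only differences are cosmetic --- you test against $1+\delta\psi$ with $\psi\in\mathcal{C}^1(I)$ where the paper writes a H\"older $\psi$ as $(\psi^++B)-(\psi^-+B)$, and you spell out the passage from $\E$ to $\overline{\E}$ (limits only give $\int_I\phi\rho\,\mathrm{d}m\geq 0$, whence the intersection forces equality), a step the paper leaves implicit.
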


\begin{proof}
  If $\alpha, \beta$ are positive real numbers, $\phi_1, \phi_2 \in \E$, and $\rho \in \D$ then
  \[
    \int (\alpha \phi_1 + \beta \phi_2) \rho \, \mathrm{d}m = \alpha \int \phi_1 \rho \, \mathrm{d}m + \beta \int \phi_2 \rho \, \mathrm{d}m > 0.
  \]

  Assume that $A, a_1, a_2, b_1, b_2$ are positive numbers such that
  \[
    e^{-A} \leq \frac{a_i}{b_i} \leq e^A, \quad i=1,2.
  \]
  Assume that $a_1 b_2 \geq a_2 b_1$ then
  \[
    \frac{a_1 + a_2}{b_1 + b_2} \frac{b_1}{a_1} = \frac{a_1 b_1 + a_2 b_1}{a_1 b_1 + a_1 b_2} \leq 1 \ \Rightarrow \ \frac{a_1 + a_2}{b_1 + b_2} \leq \frac{a_1}{b_1} \leq e^A,
  \]
  and
  \[
    \frac{a_1 + a_2}{b_1 + b_2} \frac{b_2}{a_2} = \frac{a_1 b_2 + a_2 b_2}{a_2 b_1 + a_2 b_2} \geq 1 \ \Rightarrow \ \frac{a_1 + a_2}{b_1 + b_2} \geq \frac{a_2}{b_2} \geq e^{-A}.
  \]
  This shows that $\{\, (a,b) : |\log a - \log b| \leq A \,\}$ is a convex cone. Hence, if $\phi_1, \phi_2 \in \E$, and $\rho_1, \rho_2 \in \D (a, \gamma)$ then
  \[
    \biggl| \log \frac{ \int_I (\alpha \phi_1 + \beta \phi_2) \rho_1 \, \mathrm{d}m }{ \int_I \rho_1 \, \mathrm{d}m } - \log \frac{ \int_I (\alpha \phi_1 + \beta \phi_2) \rho_2 \, \mathrm{d}m }{ \int_I \rho_2 \, \mathrm{d}m } \biggr| \leq b \theta_\mathcal{D} (\rho_1, \rho_2).
  \]
  This shows that $\E$ is a convex cone.

  To show that $ (- \overline{\E} ) \cap \overline{\E} = \{ 0 \}$, we will prove that if a bounded $\phi$ is such that $\int_I \phi \rho \, \mathrm{d}m = 0$ for all $\rho \in \D$, then $\phi = 0$.

  Let $\psi$ be $\gamma$-H\"older on $I$. Then
  \[
    \psi = (\psi^+ + B) - (\psi^- + B),
  \]
  where $B$ is a number, and $\psi^+$ and $\psi^-$ are non negative functions such that $\psi = \psi^+ - \psi^-$. If $B$ is sufficiently large, then $\psi^+ + B$ and $\psi^- + B$ are both in $\D$. Hence $\int_I \phi \psi \, \mathrm{d}m = 0$ for all $\gamma$-H\"older $\psi$.
  Since $\mathcal{C} (I)$ is dense in $L_1 (I)$, and $\mathcal{C} (I)$ is contained in the set of $\gamma$-H\"older functions on $I$, we conclude that $\int_I \phi \psi \, \mathrm{d}m = 0$ for all bounded $\psi$. Taking $\psi = \phi$ we conclude that $\phi = 0$ a.e.
\end{proof}

We will find a formula for the Hilbert metric of the cone $\E$. Recall that
\begin{align*}
  \alpha (\phi_1, \phi_2) &= \sup \{ \, t > 0 : \phi_2 - t \phi_1 \in \E \, \}, \\
  \beta (\phi_1, \phi_2) &= \inf \{ \, s > 0 : s \phi_1 - \phi_2 \in \E \, \}.
\end{align*}

Assume that $\int_I \rho_1 \, \mathrm{d}m = \int_I \rho_2 \, \mathrm{d}m = 1$. By
\[
  \int_I (\phi_2 - t \phi_1) \rho \, \mathrm{d}m > 0 \quad \Leftrightarrow \quad t < \frac{ \int_I \phi_2 \rho \, \mathrm{d}m }{ \int_I \phi_1 \rho \, \mathrm{d}m }
\]
and
\begin{align*}
  &\frac{ \int_I (\phi_2 - t \phi_1) \rho_1 \, \mathrm{d}m }{ \int_I (\phi_2 - t \phi_1) \rho_2 \, \mathrm{d}m } \leq e^{b \theta(\rho_1, \rho_2)}
  &\Leftrightarrow &
  &t \leq \frac{ \int_I \phi_2 \rho_2 \, \mathrm{d}m }{ \int_I \phi_1 \rho_2 \, \mathrm{d}m } \frac{ e^{b \theta(\rho_1, \rho_2)} - \frac{ \int \phi_2 \rho_1 \, \mathrm{d}m }{ \int_I \phi_2 \rho_2 \, \mathrm{d}m } }{  e^{b \theta(\rho_1, \rho_2)} - \frac{ \int_I \phi_1 \rho_1 \, \mathrm{d}m }{ \int_I \phi_1 \rho_2 \, \mathrm{d}m } } \\
  &\frac{ \int_I (\phi_2 - t \phi_1) \rho_1 \, \mathrm{d}m }{ \int_I (\phi_2 - t \phi_1) \rho_2 \, \mathrm{d}m } \geq e^{ - b \theta(\rho_1, \rho_2)} 
  &\Leftrightarrow &
  &t \leq \frac{ \int_I \phi_2 \rho_1 \, \mathrm{d}m }{ \int_I \phi_1 \rho_1 \, \mathrm{d}m } \frac{ e^{b \theta(\rho_1, \rho_2)} - \frac{ \int \phi_2 \rho_2 \, \mathrm{d}m }{ \int_I \phi_2 \rho_1 \, \mathrm{d}m } }{  e^{b \theta(\rho_1, \rho_2)} - \frac{ \int_I \phi_1 \rho_2 \, \mathrm{d}m }{ \int_I \phi_1 \rho_1 \, \mathrm{d}m } }
\end{align*}
we get that
\begin{multline*}
  \alpha_\mathcal{E} (\phi_1, \phi_2) = \inf \Biggl\{ 
  \frac{ \int_I \phi_2 \rho_1 \, \mathrm{d}m }{ \int_I \phi_1 \rho_1 \, \mathrm{d}m },
  \frac{ \int_I \phi_2 \rho_2 \, \mathrm{d}m }{ \int_I \phi_1 \rho_2 \, \mathrm{d}m } \frac{ e^{b \theta(\rho_1, \rho_2)} - \frac{ \int \phi_2 \rho_1 \, \mathrm{d}m }{ \int_I \phi_2 \rho_2 \, \mathrm{d}m } }{  e^{b \theta(\rho_1, \rho_2)} - \frac{ \int_I \phi_1 \rho_1 \, \mathrm{d}m }{ \int_I \phi_1 \rho_2 \, \mathrm{d}m } }, \\
  \frac{ \int_I \phi_2 \rho_1 \, \mathrm{d}m }{ \int_I \phi_1 \rho_1 \, \mathrm{d}m } \frac{ e^{b \theta(\rho_1, \rho_2)} - \frac{ \int \phi_2 \rho_2 \, \mathrm{d}m }{ \int_I \phi_2 \rho_1 \, \mathrm{d}m } }{  e^{b \theta(\rho_1, \rho_2)} - \frac{ \int_I \phi_1 \rho_2 \, \mathrm{d}m }{ \int_I \phi_1 \rho_1 \, \mathrm{d}m } }
  \Biggr\},
\end{multline*}
where the infimum is over all $\rho_1$ and $\rho_2$ with $\int_I \rho_1 \, \mathrm{d}m = \int_I \rho_2 \, \mathrm{d}m = 1$.

Similarly we get
\[
  \int_I (s \phi_2 - \phi_1) \rho \, \mathrm{d}m > 0 \quad \Leftrightarrow \quad s > \frac{ \int_I \phi_1 \rho \, \mathrm{d}m }{ \int_I \phi_2 \rho \, \mathrm{d}m }
\]
and
\begin{align*}
  &\frac{ \int_I (s \phi_2 - \phi_1) \rho_1 \, \mathrm{d}m }{ \int_I (s \phi_2 - \phi_1) \rho_2 \, \mathrm{d}m } \leq e^{b \theta(\rho_1, \rho_2)}
  &\Leftrightarrow &
  &s \geq \frac{ \int_I \phi_1 \rho_2 \, \mathrm{d}m }{ \int_I \phi_2 \rho_2 \, \mathrm{d}m } \frac{ e^{b \theta(\rho_1, \rho_2)} - \frac{ \int \phi_1 \rho_1 \, \mathrm{d}m }{ \int_I \phi_1 \rho_2 \, \mathrm{d}m } }{  e^{b \theta(\rho_1, \rho_2)} - \frac{ \int_I \phi_2 \rho_1 \, \mathrm{d}m }{ \int_I \phi_2 \rho_2 \, \mathrm{d}m } } \\
  &\frac{ \int_I (s \phi_2 - \phi_1) \rho_1 \, \mathrm{d}m }{ \int_I (s \phi_2 - \phi_1) \rho_2 \, \mathrm{d}m } \geq e^{ - b \theta(\rho_1, \rho_2)}
  &\Leftrightarrow &
  &s \geq \frac{ \int_I \phi_1 \rho_1 \, \mathrm{d}m }{ \int_I \phi_2 \rho_1 \, \mathrm{d}m } \frac{ e^{b \theta(\rho_1, \rho_2)} - \frac{ \int \phi_1 \rho_2 \, \mathrm{d}m }{ \int_I \phi_1 \rho_1 \, \mathrm{d}m } }{  e^{b \theta(\rho_1, \rho_2)} - \frac{ \int_I \phi_2 \rho_2 \, \mathrm{d}m }{ \int_I \phi_2 \rho_1 \, \mathrm{d}m } }
\end{align*}
Hence
\begin{multline*}
  \beta_\mathcal{E} (\phi_1, \phi_2) = \sup \Biggl\{ 
  \frac{ \int_I \phi_1 \rho_1 \, \mathrm{d}m }{ \int_I \phi_2 \rho_1 \, \mathrm{d}m },
  \frac{ \int_I \phi_1 \rho_2 \, \mathrm{d}m }{ \int_I \phi_2 \rho_2 \, \mathrm{d}m } \frac{ e^{b \theta(\rho_1, \rho_2)} - \frac{ \int \phi_1 \rho_1 \, \mathrm{d}m }{ \int_I \phi_1 \rho_2 \, \mathrm{d}m } }{  e^{b \theta(\rho_1, \rho_2)} - \frac{ \int_I \phi_2 \rho_1 \, \mathrm{d}m }{ \int_I \phi_2 \rho_2 \, \mathrm{d}m } }, \\
  \frac{ \int_I \phi_1 \rho_1 \, \mathrm{d}m }{ \int_I \phi_2 \rho_1 \, \mathrm{d}m } \frac{ e^{b \theta(\rho_1, \rho_2)} - \frac{ \int \phi_1 \rho_2 \, \mathrm{d}m }{ \int_I \phi_1 \rho_1 \, \mathrm{d}m } }{  e^{b \theta(\rho_1, \rho_2)} - \frac{ \int_I \phi_2 \rho_2 \, \mathrm{d}m }{ \int_I \phi_2 \rho_1 \, \mathrm{d}m } }
  \Biggr\},
\end{multline*}
where the supremum is over all $\rho_1$ and $\rho_2$ with $\int_I \rho_1 \, \mathrm{d}m = \int_I \rho_2 \, \mathrm{d}m = 1$.
We  now get $\theta_\mathcal{E}$ as $\theta_\mathcal{E} = \log \frac{\beta_\mathcal{E}}{\alpha_\mathcal{E}}$, but let us not write down the exact formula, as it would consume quite some space.

We will prove the following lemma in a similar way as we did for Lemma~\ref{lem:contractiononD}. While doing this, we will make use of Lemma~\ref{lem:contractiononD}.

\begin{lemma} \label{lem:finiteEdiam}
  The diameter of $\L (\E)$ is finite in $\E$, provided that $b > \Bigl(1 - \tanh \bigl( \frac{1}{2} \log \frac{1 + \lambda}{1 - \lambda} + 2^{\gamma-1} \lambda a \bigr) \Bigr)^{-1}$.
\end{lemma}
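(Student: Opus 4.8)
The plan is to imitate the proof of Lemma~\ref{lem:contractiononD}~ii): via Lemma~\ref{lem:L-U} one trades the action of $\L$ on $\E$ for the action of $\U$ on $\D$, and one exploits that $\U$ sends $\D$ into the smaller cone $\mathcal{D}(\lambda^\gamma a, \gamma)$, which --- by the computation carried out in the proof of Lemma~\ref{lem:contractiononD} --- has finite $\theta_\mathcal{D}$-diameter in $\D$, call it $\Delta_0$, and on which $\U$ contracts $\theta_\mathcal{D}$ by the factor $\lambda_0$.

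First I would establish two estimates for a single $\phi \in \E$. Since $\theta_\mathcal{E}$ depends only on the equivalence class, I normalise $\int_I \phi \, \mathrm{d}m = 1$, and in the formulas for $\alpha_\mathcal{E}$ and $\beta_\mathcal{E}$ I restrict to $\rho_1, \rho_2 \in \D$ with $\int_I \rho_i \, \mathrm{d}m = 1$ and $\theta_\mathcal{D}(\rho_1, \rho_2) < \infty$ (pairs at infinite distance cause no trouble and may be ignored). For normalised $\rho$ put $c(\rho) = \int_I \U\rho \, \mathrm{d}m$ and $\sigma(\rho) = \U\rho/c(\rho)$; by Lemma~\ref{lem:contractiononD}~i), $\sigma(\rho) \in \mathcal{D}(\lambda^\gamma a, \gamma)$ with $\int_I \sigma(\rho) \, \mathrm{d}m = 1$, and by Lemma~\ref{lem:L-U} (whose change-of-variables-plus-Fubini proof applies equally to bounded $\phi$), $\int_I \L\phi \cdot \rho \, \mathrm{d}m = \int_I \phi \cdot \U\rho \, \mathrm{d}m = c(\rho) \int_I \phi \, \sigma(\rho) \, \mathrm{d}m$. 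Now $\log \rho$ being $(a,\gamma)$-H\"older on $I = [-1,1]$ with $\int_I \rho \, \mathrm{d}m = 1$ forces $e^{-2^\gamma a} \le \rho \le e^{2^\gamma a}$, whence $c(\rho) \in [e^{-2^\gamma a}, e^{2^\gamma a}]$; and applying the defining inequality of $\E$ to the normalised pair $\sigma(\rho), 1 \in \mathcal{D}(\lambda^\gamma a, \gamma)$ gives $e^{-b\Delta_0} < \int_I \phi \, \sigma(\rho) \, \mathrm{d}m < e^{b\Delta_0}$. Together these sandwich $\int_I \L\phi \cdot \rho \, \mathrm{d}m$ between two positive constants $p_- \le p_+$ depending only on $\lambda, a, \gamma, b$. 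For the second estimate, writing $\theta = \theta_\mathcal{D}(\rho_1, \rho_2)$, finiteness of $\theta$ gives $\alpha_\mathcal{D}(\rho_1,\rho_2) \rho_1 \le \rho_2 \le \beta_\mathcal{D}(\rho_1,\rho_2) \rho_1$ pointwise, and integrating against $m$ improves this to $e^{-\theta}\rho_1 \le \rho_2 \le e^\theta \rho_1$; pushing this through $\U$ and integrating yields $|\log c(\rho_1) - \log c(\rho_2)| \le \theta$, while the $\E$-inequality applied to $\sigma(\rho_1), \sigma(\rho_2)$, together with Lemma~\ref{lem:contractiononD}~ii) (note $\theta_\mathcal{D}(\sigma(\rho_1), \sigma(\rho_2)) = \theta_\mathcal{D}(\U\rho_1, \U\rho_2) \le \lambda_0 \theta$), yields $|\log \int_I \phi \, \sigma(\rho_1) \, \mathrm{d}m - \log \int_I \phi \, \sigma(\rho_2) \, \mathrm{d}m| < b\lambda_0 \theta$. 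Combining,
\[
  \biggl| \log \frac{ \int_I \L\phi \cdot \rho_1 \, \mathrm{d}m }{ \int_I \L\phi \cdot \rho_2 \, \mathrm{d}m } \biggr| < (1 + b\lambda_0) \, \theta .
\]
Since $1 + b\lambda_0 < b$ --- which is exactly the hypothesis $b > (1-\lambda_0)^{-1}$ --- this already shows $\L\phi \in \E$, so $\L(\E) \subset \E$ and the statement makes sense.

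This is also where the hypothesis enters decisively: $b > (1-\lambda_0)^{-1}$ says precisely that $\mu := (1 + b\lambda_0)/b$ lies in $(0,1)$. I would then invoke the elementary inequality
\[
  (1-\mu)(e^u - 1) \le e^u - e^v \le (1+\mu)(e^u - 1), \qquad 0 < \mu < 1, \ u > 0, \ |v| \le \mu u,
\]
valid because both inequalities become equalities at $u = 0$ and the comparison of derivatives is favourable for $u > 0$. Applying the two estimates above to $\phi_1, \phi_2 \in \E$ (each normalised by $\int_I \phi_j \, \mathrm{d}m = 1$) and substituting into the formulas for $\alpha_\mathcal{E}(\L\phi_1, \L\phi_2)$ and $\beta_\mathcal{E}(\L\phi_1, \L\phi_2)$ derived before the lemma: each quantity of the shape $e^{b\theta(\rho_1, \rho_2)}$ minus a ratio $\int_I \L\phi_j \cdot \rho_1 \, \mathrm{d}m \big/ \int_I \L\phi_j \cdot \rho_2 \, \mathrm{d}m$ (or its reciprocal) equals $e^u - e^v$ with $u = b\theta$ and $|v| < \mu u$, so every quotient of two such quantities lies in $[(1-\mu)/(1+\mu), (1+\mu)/(1-\mu)]$ by the inequality, while every plain factor $\int_I \L\phi_i \cdot \rho_k \, \mathrm{d}m \big/ \int_I \L\phi_j \cdot \rho_k \, \mathrm{d}m$ lies in $[p_-/p_+, p_+/p_-]$. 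Hence each of the three terms defining $\alpha_\mathcal{E}(\L\phi_1, \L\phi_2)$ is bounded below by a positive constant $c^\ast$ and each of the three terms defining $\beta_\mathcal{E}(\L\phi_1, \L\phi_2)$ is bounded above by a finite constant $C^\ast$, with $c^\ast, C^\ast$ depending only on $\lambda, a, \gamma, b$; therefore $\theta_\mathcal{E}(\L\phi_1, \L\phi_2) = \log(\beta_\mathcal{E}/\alpha_\mathcal{E}) \le \log(C^\ast/c^\ast)$ for all $\phi_1, \phi_2 \in \E$, which is the assertion.

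The one genuine departure from \cite{Viana}, and the step demanding care, is the correction factor $c(\rho) = \int_I \U\rho \, \mathrm{d}m$: because the images $f_{k,t}(I)$ overlap it does not, in general, equal $\int_I \rho \, \mathrm{d}m$, so one must control both its size and, above all, its variation $|\log c(\rho_1) - \log c(\rho_2)| \le \theta_\mathcal{D}(\rho_1, \rho_2)$. It is exactly this extra unit of $\theta_\mathcal{D}$-modulus --- on top of the $b\lambda_0$ produced by the contraction of $\U$ --- that dictates the threshold $b > (1-\lambda_0)^{-1}$; what the elementary inequality really guarantees is that the quotients $(e^u - e^{v_2})/(e^u - e^{v_1})$ stay bounded away from $0$ and $\infty$ \emph{uniformly as $\theta \downarrow 0$}, which fails once $\mu \ge 1$.
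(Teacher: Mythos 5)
Your proposal is correct and follows essentially the same route as the paper: the same splitting of $\log\frac{\int\rho_i\mathcal{L}_\varepsilon\phi\,\mathrm{d}m}{\int\rho_i\,\mathrm{d}m}$ into a $b\lambda_0\theta_\mathcal{D}(\rho_1,\rho_2)$ contraction term plus a $\theta_\mathcal{D}(\rho_1,\rho_2)$ normalisation-correction term (the genuine overlap issue, which you correctly flag), the same threshold $b>b\lambda_0+1$, the same treatment of the quotients $(e^{b\theta}-e^{v_2})/(e^{b\theta}-e^{v_1})$, and the same use of the defining inequality of $\E$ against the constant function $1$ to bound the remaining plain ratios. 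The only differences are cosmetic (explicit normalisation via $\sigma(\rho)$, and your packaged elementary inequality in place of the paper's $\inf_{z>1}\frac{z-z^\mu}{z-z^{-\mu}}$ computation).
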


\begin{proof}
  Let $\rho_1 , \rho_2 \in \D$ and $\phi \in \E$. Then
  \begin{align*}
    \biggl| \log & \frac{ \int_I \rho_1 \L \phi \, \mathrm{d}m }{ \int_I \rho_1 \, \mathrm{d}m } - \log \frac{ \int_I \rho_2 \L \phi \, \mathrm{d}m }{ \int_I \rho_2 \, \mathrm{d}m } \biggr| \\
    & = \biggl| \log \frac{ \int_I \U \rho_1 \phi \, \mathrm{d}m }{ \int_I \rho_1 \, \mathrm{d}m } - \log \frac{ \int_I \U \rho_2 \phi \, \mathrm{d}m }{ \int_I \rho_2 \, \mathrm{d}m } \biggr| \\
    & \leq \biggl| \log \frac{ \int_I \U \rho_1 \phi \, \mathrm{d}m }{ \int_I \U \rho_1 \, \mathrm{d}m } - \log \frac{ \int_I \U \rho_2 \phi \, \mathrm{d}m }{ \int_I \U \rho_2 \, \mathrm{d}m } \biggr| \\
    & \hspace{4cm} + \biggl| \log \frac{ \int_I \U \rho_1 \, \mathrm{d}m }{ \int_I \rho_1 \, \mathrm{d}m } - \log \frac{ \int_I \U \rho_2 \, \mathrm{d}m }{ \int_I \rho_2 \, \mathrm{d}m } \biggr| \\
    & = \biggl| \log \frac{ \int_I \U \rho_1 \phi \, \mathrm{d}m }{ \int_I \U \rho_1 \, \mathrm{d}m } - \log \frac{ \int_I \U \rho_2 \L \phi \, \mathrm{d}m }{ \int_I \U \rho_2 \, \mathrm{d}m } \biggr| + \Biggl| \log \frac{ \frac{ \int_I \U \rho_1 \, \mathrm{d}m }{ \int_I \U \rho_2 \, \mathrm{d}m } }{ \frac{ \int_I \rho_1 \, \mathrm{d}m }{ \int_I \rho_2 \, \mathrm{d}m } } \Biggr|.
  \end{align*}
  We now observe that
  \[
    \sup \biggl\{ \frac{\psi_1}{\psi_2} \biggr\} \geq \frac{\psi_1 (x)}{\psi_2 (x)} \geq \inf \biggl\{ \frac{\psi_1}{\psi_2} \biggr\} \quad \Rightarrow \quad \sup \biggl\{ \frac{\psi_1}{\psi_2} \biggr\} \geq \frac{ \int_I \psi_1 \, \mathrm{d}m }{ \int_I \psi_2 \, \mathrm{d}m} \geq \inf \biggl\{ \frac{\psi_1}{\psi_2} \biggr\}
  \]
  holds for all positive functions $\psi_1$ and $\psi_2$. This implies that
  \[
    \frac{ \inf \Bigl\{ \frac{\rho_1}{\rho_2} \Bigr\} }{ \sup \Bigl\{ \frac{\rho_1}{\rho_2} \Bigr\} }
    \leq 
    \frac{ \inf \Bigl\{ \frac{\U \rho_1}{\U \rho_2} \Bigr\} }{ \sup \Bigl\{ \frac{\rho_1}{\rho_2} \Bigr\} }
    \leq 
    \frac{ \frac{ \int_I \U \rho_1 \, \mathrm{d}m }{ \int_I \U \rho_2 \, \mathrm{d}m } }{ \frac{ \int_I \rho_1 \, \mathrm{d}m }{ \int_I \rho_2 \, \mathrm{d}m } }
    \leq
    \frac{ \sup \Bigl\{ \frac{\U \rho_1}{\U \rho_2} \Bigr\} }{ \inf \Bigl\{ \frac{\rho_1}{\rho_2} \Bigr\} }
    \leq
    \frac{ \sup \Bigl\{ \frac{\rho_1}{\rho_2} \Bigr\} }{ \inf \Bigl\{ \frac{\rho_1}{\rho_2} \Bigr\} }.
  \]
  It follows that
  \[
    \Biggl| \log \frac{ \frac{ \int_I \U \rho_1 \, \mathrm{d}m }{ \int_I \U \rho_2 \, \mathrm{d}m } }{ \frac{ \int_I \rho_1 \, \mathrm{d}m }{ \int_I \rho_2 \, \mathrm{d}m } } \Biggr|
    \leq
    \theta_\mathcal{D} (\rho_1, \rho_2).
  \]
  Using this, we now get that
  \begin{multline*}
    \biggl| \log \frac{ \int_I \rho_1 \L \phi \, \mathrm{d}m }{ \int_I \rho_1 \, \mathrm{d}m } - \log \frac{ \int_I \rho_2 \L \phi \, \mathrm{d}m }{ \int_I \rho_2 \, \mathrm{d}m } \biggr|
    \leq b \theta_\mathcal{D} (\U \rho_1, \U \rho_2) + \theta_\mathcal{D} (\rho_1, \rho_2) \\
    \leq (b \lambda_0 + 1) \theta_\mathcal{D} (\rho_1, \rho_2),
  \end{multline*}
  where we used Lemma~\ref{lem:contractiononD} in the last step.
  Hence if $\int_I \rho_1 \, \mathrm{d}m = \int_I \rho_2 \, \mathrm{d}m = 1$, then
  \begin{equation} \label{eq:Lq}
    e^{-(b\lambda_0 + 1) \theta_\mathcal{D} (\rho_1, \rho_2)} \leq \frac{ \int_I \rho_1 \L \phi \, \mathrm{d}m }{ \int_I \rho_2 \L \phi \, \mathrm{d}m } \leq e^{(b\lambda_0 + 1) \theta_\mathcal{D} (\rho_1, \rho_2)}.
  \end{equation}

  We now consider some of the expressions that occur in the expressions for $\alpha_\mathcal{E}$ and $\beta_\mathcal{E}$.
  Let us now choose $b$ so large that $b > b \lambda_0 + 1$. From \eqref{eq:Lq} we get
  \[
    \frac{ e^{b \theta_\mathcal{D} (\rho_1, \rho_2)} - \frac{ \int_I \rho_1 \L \phi_1 \, \mathrm{d}m }{ \int_I \rho_2 \L \phi_1 \, \mathrm{d}m } }{ e^{b \theta_\mathcal{D} (\rho_1, \rho_2)} - \frac{ \int_I \rho_1 \L \phi_2 \, \mathrm{d}m }{ \int_I \rho_2 \L \phi_2 \, \mathrm{d}m } }
    \geq 
    \frac{ e^{b \theta_\mathcal{D} (\rho_1, \rho_2)} - e^{(b\lambda_0 + 1) \theta_\mathcal{D} (\rho_1, \rho_2) } }{ e^{b \theta_\mathcal{D} (\rho_1, \rho_2)} -  e^{-(b\lambda_0 + 1) \theta_\mathcal{D} (\rho_1, \rho_2) } }
  \]
  and
  \[
    \frac{ e^{b \theta_\mathcal{D} (\rho_1, \rho_2)} - \frac{ \int_I \rho_1 \L \phi_1 \, \mathrm{d}m }{ \int_I \rho_2 \L \phi_1 \, \mathrm{d}m } }{ e^{b \theta_\mathcal{D} (\rho_1, \rho_2)} - \frac{ \int_I \rho_1 \L \phi_2 \, \mathrm{d}m }{ \int_I \rho_2 \L \phi_2 \, \mathrm{d}m } }
    \leq 
    \frac{ e^{b \theta_\mathcal{D} (\rho_1, \rho_2)} - e^{-(b\lambda_0 + 1) \theta_\mathcal{D} (\rho_1, \rho_2) } }{ e^{b \theta_\mathcal{D} (\rho_1, \rho_2)} -  e^{(b\lambda_0 + 1) \theta_\mathcal{D} (\rho_1, \rho_2) } }.
  \]
  Similarly as in the proof of Lemma~\ref{lem:contractiononD}, we then get that
  \begin{multline*}
    \frac{b - 1 - b\lambda_0}{b + 1 + b \lambda_0}
    =
    \frac{1 - \frac{1 + b\lambda_0}{b} }{1 + \frac{1 + b\lambda_0}{b} }
    \leq
    \frac{ e^{b \theta_\mathcal{D} (\rho_1, \rho_2)} - \frac{ \int_I \rho_1 \L \phi_1 \, \mathrm{d}m }{ \int_I \rho_2 \L \phi_1 \, \mathrm{d}m } }{ e^{b \theta_\mathcal{D} (\rho_1, \rho_2)} - \frac{ \int_I \rho_1 \L \phi_2 \, \mathrm{d}m }{ \int_I \rho_2 \L \phi_2 \, \mathrm{d}m } } \\
    \leq
    \frac{1 + \frac{1 + b\lambda_0}{b} }{1 - \frac{1 + b\lambda_0}{b} } 
    =
    \frac{b + 1 + b\lambda_0}{b - 1 - b \lambda_0}.
  \end{multline*}
  This shows that
  \[
    \alpha_\mathcal{E} (\L \phi_1, \L \phi_2) \geq     \frac{b - 1 - b\lambda_0}{b + 1 + b \lambda_0} \inf_{\rho \in \D} \frac{ \int \rho \L \phi_2 \, \mathrm{d}m }{ \int \rho \L \phi_1 \, \mathrm{d}m }
  \]
  and
  \[
    \beta_\mathcal{E} (\L \phi_1, \L \phi_2) \leq     \frac{b + 1 + b\lambda_0}{b - 1 - b \lambda_0} \sup_{\rho \in \D} \frac{ \int \rho \L \phi_2 \, \mathrm{d}m }{ \int \rho \L \phi_1 \, \mathrm{d}m }.
  \]
  We therefore have
  \[
    \theta_\mathcal{E} (\L \phi_1, \L \phi_2) \leq \log \Bigl( \frac{b + 1 + b\lambda_0}{b - 1 - b \lambda_0} \Bigr)^2 \sup_{\rho_1, \rho_2 \in \D} 
    \frac{ \int \rho_1 \L \phi_2 \, \mathrm{d}m }{ \int \rho_1 \L \phi_1 \, \mathrm{d}m }
    \frac{ \int \rho_2 \L \phi_1 \, \mathrm{d}m }{ \int \rho_2 \L \phi_2 \, \mathrm{d}m }.
  \]
  It remains to estimate this supremum and show that it is finite.

  Observe first that $1$ is an element in $\D$ with $\int_I 1 \, \mathrm{d}m = 1$. An estimate of the diameter of $\U (\D)$ is given in the proof of Lemma~\ref{lem:contractiononD}. Let $D$ be that number. If $\int_I \rho \, \mathrm{d}m = 1 $, then
  \[
    \int_I \rho \L \phi \, \mathrm{d}m = \int_I \U \rho \phi \, \mathrm{d}m \leq e^{b \theta_\mathcal{D} (1, \U \rho)} \int_I 1 \phi \, \mathrm{d}m \leq e^{bD} \int_I \phi \, \mathrm{d}m
  \]
  and
  \[
    \int_I \rho \L \phi \, \mathrm{d}m = \int_I \U \rho \phi \, \mathrm{d}m \geq e^{- b \theta_\mathcal{D} (1, \U \rho)} \int_I 1 \phi \, \mathrm{d}m \geq e^{-bD} \int_I \phi \, \mathrm{d}m.
  \]
  This implies that
  \[
    e^{-2bD} \leq \frac{ \int_I \rho_1 \L \phi \, \mathrm{d}m }{ \int_I \rho_2 \L \phi \, \mathrm{d}m } \leq e^{2bD},
  \]
  and so
  \[
    \sup_{\rho_1, \rho_2 \in \D} 
    \frac{ \int \rho_1 \L \phi_2 \, \mathrm{d}m }{ \int \rho_1 \L \phi_1 \, \mathrm{d}m }
    \frac{ \int \rho_2 \L \phi_1 \, \mathrm{d}m }{ \int \rho_2 \L \phi_2 \, \mathrm{d}m } \leq e^{4bD}.
  \]
  Finally, we get that
  \begin{multline*}
    \theta_\mathcal{E} ( \L (\E) ) \leq 4bD + 2 \log \Bigl( \frac{b + 1 + b\lambda_0}{b - 1 - b \lambda_0} \Bigr) \\
    = 8b \log \frac{1 + \lambda}{1 - \lambda} + 2^{3+\gamma} \lambda a b + 2 \log \biggl( \frac{b + 1 + b \tanh \bigl( \frac{1}{2} \log \frac{1 + \lambda}{1 - \lambda} + 2^{\gamma-1} \lambda a \bigr)}{b - 1 - b \tanh \bigl( \frac{1}{2} \log \frac{1 + \lambda}{1 - \lambda} + 2^{\gamma-1} \lambda a \bigr)} \biggr),
  \end{multline*}
  where we used that $D = 2 \log \frac{1 + \lambda}{1 - \lambda} + 2^{1+\gamma} \lambda a$ and $\lambda_0 = \tanh \bigl( \frac{1}{2} \log \frac{1 + \lambda}{1 - \lambda} + 2^{\gamma-1} \lambda a \bigr)$.
  The condition $b > b \lambda_0 + 1$ is equivalent to $b > \Bigl(1 - \tanh \bigl( \frac{1}{2} \log \frac{1 + \lambda}{1 - \lambda} + 2^{\gamma-1} \lambda a \bigr) \Bigr)^{-1}$.
\end{proof}

Lemma~\ref{lem:finiteEdiam} together with Theorem~\ref{the:contraction}, shows that $\L$ is a contraction in the cone metric. We get the following corollary.

\begin{corollary} \label{cor:Liscontraction}
  There is a number $\lambda_1 < 1$ such that $\theta_\mathcal{E} (\L (\phi_1), \L (\phi_2)) \leq \lambda_1 \theta_\mathcal{E} (\phi_1, \phi_2)$ holds for all $\phi_1, \phi_2 \in \E$.
\end{corollary}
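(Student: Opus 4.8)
The plan is to read off the corollary as a direct application of Birkhoff's Theorem~\ref{the:contraction} to the operator $\L$ acting on the single cone $\E$, once Lemma~\ref{lem:finiteEdiam} has supplied the finiteness of the relevant diameter. So the only real work is to assemble the hypotheses of Theorem~\ref{the:contraction} with $C_1 = C_2 = \E$ and $\mathcal{L} = \L$.

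First I would check that $\L(\E) \subseteq \E$, which is implicit but must be stated for Theorem~\ref{the:contraction} to apply. Fix $\phi \in \E$. The function $\L\phi$ is bounded, since $h$ is bounded and the defining sum is finite. For any $\rho \in \D$, Lemma~\ref{lem:L-U} gives $\int_I \rho \cdot \L\phi \,\mathrm{d}m = \int_I \U\rho \cdot \phi \,\mathrm{d}m$; by Lemma~\ref{lem:contractiononD}~i) we have $\U\rho \in \mathcal{D}(\lambda^\gamma a, \gamma) \subseteq \D$ because $\lambda^\gamma < 1$, and therefore $\int_I \rho \cdot \L\phi \,\mathrm{d}m > 0$. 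Finally, the first computation inside the proof of Lemma~\ref{lem:finiteEdiam} establishes
\[
  \biggl| \log \frac{\int_I \rho_1 \L\phi \,\mathrm{d}m}{\int_I \rho_1 \,\mathrm{d}m} - \log \frac{\int_I \rho_2 \L\phi \,\mathrm{d}m}{\int_I \rho_2 \,\mathrm{d}m} \biggr| \leq (b\lambda_0 + 1)\,\theta_\mathcal{D}(\rho_1, \rho_2),
\]
and the hypothesis on $b$ in Lemma~\ref{lem:finiteEdiam} is exactly the requirement $b\lambda_0 + 1 < b$; hence $\L\phi$ satisfies the defining inequality of $\E$ with constant strictly below $b$, so $\L\phi \in \E$.

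Next, Lemma~\ref{lem:finiteEdiam} says that $D := \sup\{\,\theta_\mathcal{E}(\L\phi_1, \L\phi_2) : \phi_1, \phi_2 \in \E\,\}$ is finite. Plugging $C_1 = C_2 = \E$ and $\mathcal{L} = \L$ into Theorem~\ref{the:contraction} then yields
\[
  \theta_\mathcal{E}(\L\phi_1, \L\phi_2) \leq \tanh\Bigl(\tfrac{D}{4}\Bigr)\,\theta_\mathcal{E}(\phi_1, \phi_2) \qquad \text{for all } \phi_1, \phi_2 \in \E,
\]
and since $D < \infty$ we have $\lambda_1 := \tanh(D/4) < 1$, which is precisely the claim. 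If an explicit constant is wanted, one feeds the bound on $\theta_\mathcal{E}(\L(\E))$ displayed at the end of the proof of Lemma~\ref{lem:finiteEdiam} into this formula.

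I do not expect a genuine obstacle here: all the analytic effort is already spent in Lemma~\ref{lem:finiteEdiam}. The one point that deserves a sentence of care is the verification $\L(\E) \subseteq \E$ --- in particular the strict positivity $\int_I \rho\,\L\phi \,\mathrm{d}m > 0$, which rests on $\U$ carrying $\D$ back into itself. It is also worth recalling, as noted after Theorem~\ref{the:contraction}, that this contraction does not yet hand us a fixed point of $\L$, because $\E/\!\sim$ need not be complete in $\theta_\mathcal{E}$; the invariant measure will have to be extracted from the resulting Cauchy sequence in a later step.
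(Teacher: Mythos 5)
Your proposal is correct and matches the paper exactly: the paper offers no separate proof, stating only that Lemma~\ref{lem:finiteEdiam} together with Theorem~\ref{the:contraction} yields the corollary, which is precisely your application of Birkhoff's theorem with $C_1 = C_2 = \E$ and $\lambda_1 = \tanh(D/4)$. Your explicit verification that $\L(\E) \subseteq \E$ is a sensible extra bit of care, but it only makes explicit what is already implicit in the statement and proof of Lemma~\ref{lem:finiteEdiam}.
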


\section{Invariant measure}

We will use Corollary~\ref{cor:Liscontraction} to construct the invariant measure of the iterated function system. To do this we will need the following Lemma.

\begin{lemma} \label{lem:cauchy}
  If $(\phi_n)_{n \in \mathbb{N}}$ is a Cauchy sequence in the projective metric $\theta_\mathcal{E}$ of $\E$, such that $\int_I \phi_n \, \mathrm{d}m = 1$, and $\psi$ is any continuous function on $I$, then the sequence $\bigl( \int_I \psi \phi_n \, \mathrm{d}m \bigr)_{n \in \mathbb{N}}$ is Cauchy.
\end{lemma}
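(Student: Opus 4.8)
The plan is to use the cone condition to make the linear functionals $\rho\mapsto\int_I\rho\phi\,\mathrm{d}m$, $\rho\in\D$, comparable along the sequence, and then to write $\psi$ as a difference of two elements of $\D$, for which the conclusion is immediate.

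First I would record a sandwich estimate. The constant function $1$ lies in $\D$ and has $\int_I 1\,\mathrm{d}m=1$. Writing $\theta_{mn}=\theta_\mathcal{E}(\phi_m,\phi_n)$, I note that for $t<\alpha_\mathcal{E}(\phi_m,\phi_n)$ the function $\phi_n-t\phi_m$ lies in $\mathcal{E}$, so $\int_I\rho(\phi_n-t\phi_m)\,\mathrm{d}m>0$ for all $\rho\in\D$; letting $t\uparrow\alpha_\mathcal{E}(\phi_m,\phi_n)$, and arguing symmetrically with $s\phi_m-\phi_n\in\mathcal{E}$ for $s>\beta_\mathcal{E}(\phi_m,\phi_n)$, I obtain
\[
  \alpha_\mathcal{E}(\phi_m,\phi_n)\int_I\rho\phi_m\,\mathrm{d}m\le\int_I\rho\phi_n\,\mathrm{d}m\le\beta_\mathcal{E}(\phi_m,\phi_n)\int_I\rho\phi_m\,\mathrm{d}m
\]
for every $\rho\in\D$. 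Taking $\rho=1$ and using $\int_I\phi_k\,\mathrm{d}m=1$ gives $\alpha_\mathcal{E}(\phi_m,\phi_n)\le 1\le\beta_\mathcal{E}(\phi_m,\phi_n)$, and together with $\beta_\mathcal{E}(\phi_m,\phi_n)/\alpha_\mathcal{E}(\phi_m,\phi_n)=e^{\theta_{mn}}$ this forces $e^{-\theta_{mn}}\le\alpha_\mathcal{E}(\phi_m,\phi_n)\le 1\le\beta_\mathcal{E}(\phi_m,\phi_n)\le e^{\theta_{mn}}$. Since a Cauchy sequence is bounded, $M:=\sup_n\theta_\mathcal{E}(\phi_1,\phi_n)<\infty$, and applying the sandwich to the pair $(\phi_1,\phi_m)$ gives $\int_I\rho\phi_m\,\mathrm{d}m\le e^M\int_I\rho\phi_1\,\mathrm{d}m$ uniformly in $m$, so that
\[
  \Bigl|\int_I\rho\phi_n\,\mathrm{d}m-\int_I\rho\phi_m\,\mathrm{d}m\Bigr|\le(e^{\theta_{mn}}-1)\,e^M\int_I\rho\phi_1\,\mathrm{d}m\longrightarrow 0
\]
as $m,n\to\infty$. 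Hence $(\int_I\rho\phi_n\,\mathrm{d}m)_n$ is a Cauchy sequence of reals for each $\rho\in\D$, and by linearity $(\int_I(\rho_1-\rho_2)\phi_n\,\mathrm{d}m)_n$ is Cauchy for all $\rho_1,\rho_2\in\D$.

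It remains to put a given continuous $\psi$ into this form. Being continuous on $I$, $\psi$ is $\gamma$-H\"older (as was already used in the construction of $\mathcal{E}$), so for $B$ large enough $\psi+B$ is bounded below by a positive constant and $\log(\psi+B)$ is $(a,\gamma)$-H\"older; then both $\psi+B$ and the constant $B$ lie in $\D$, and $\psi=(\psi+B)-B$. Consequently $(\int_I\psi\phi_n\,\mathrm{d}m)_n=(\int_I(\psi+B)\phi_n\,\mathrm{d}m-B)_n$ is Cauchy, which is the assertion.

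The only genuinely substantial step is the first one; the rest is bookkeeping. The point calling for some care is the last reduction, namely passing from test functions that are differences of elements of $\D$ to an arbitrary continuous $\psi$. If one does not wish to treat every continuous function as H\"older, one instead takes polynomials $p_k\to\psi$ uniformly (each $p_k$ lying in $\D$ after addition of a large constant), applies the previous paragraph to each $p_k$, and controls $|\int_I(\psi-p_k)\phi_n\,\mathrm{d}m|\le\|\psi-p_k\|_\infty\int_I|\phi_n|\,\mathrm{d}m$; the argument then closes once $\sup_n\int_I|\phi_n|\,\mathrm{d}m<\infty$. This is automatic in the situation where the lemma is applied, since there $\phi_n=\L^n1\ge 0$ and hence $\int_I|\phi_n|\,\mathrm{d}m=\int_I\phi_n\,\mathrm{d}m=1$; more generally it can be read off from the second defining inequality of $\mathcal{E}$, which prevents $\phi_n$ from oscillating too violently relative to $\theta_\mathcal{D}$.
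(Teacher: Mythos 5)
Your argument is essentially the paper's: first control $\int_I\rho\phi_n\,\mathrm{d}m$ for $\rho\in\D$ via the projective metric (your sandwich $e^{-\theta_{mn}}\le\alpha_\mathcal{E}\le1\le\beta_\mathcal{E}\le e^{\theta_{mn}}$, obtained by testing against $\rho=1$, is in fact a cleaner justification of the inequality the paper writes down directly), then pass to H\"older $\psi$ by adding a large constant, then to general continuous $\psi$ by uniform approximation. One correction: the parenthetical claim that a continuous function on $I$ is automatically $\gamma$-H\"older is false (e.g.\ $1/\log(1/|x|)$ near $0$), so the ``fallback'' in your last paragraph is not optional --- it is the necessary final step, and it is exactly what the paper does (approximating $\psi$ uniformly by a $\gamma$-H\"older $\psi_0$ rather than by polynomials, which is the same thing). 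You are also right that this last step silently requires $\sup_n\int_I|\phi_n|\,\mathrm{d}m<\infty$; the paper does not comment on this, but as you note it is immediate in the only application, where $\phi_n=\L^n 1\ge0$ and $\int_I\phi_n\,\mathrm{d}m=1$. With the main line rerouted through the uniform-approximation argument, the proof is complete.
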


\begin{proof}
  Let first $\psi$ be a function in $\D$. Then
  \[
    \biggl| \int_I \psi \phi_m \, \mathrm{d}m - \int_I \psi \phi_n \, \mathrm{d}m \biggr| \leq \sup \psi \Biggl| \frac{ \int_I \psi \phi_m \, \mathrm{d}m }{ \int_I \psi \phi_n \, \mathrm{d}m } - 1\Biggr| \leq \sup \psi \bigl| e^{ \theta_\mathcal{E} (\phi_m, \phi_n)} - 1 \bigr|.
  \]
  As $\theta_\mathcal{E} (\phi_m, \phi_n) \to 0$ when $m,n \to \infty$, this shows that $\int_I \psi \phi_n \, \mathrm{d}m$ is Cauchy.

  Now, let $\psi$ be $\gamma$-H\"older. Then the function $\psi + c$ is in $\D$ if $c$ is sufficiently large. By what we just proved, it follows that $\int_I (\psi + c) \phi_n \, \mathrm{d}m$ is Cauchy. Since $\int_I c \phi_n \, \mathrm{d}m = c$ for all $n$, it follows that $\int_I \psi \phi_n \, \mathrm{d}m$ is Cauchy.

  Let $\varepsilon > 0$. For any continuous function $\psi$, we may take a $\gamma$-H\"older continuous function $\psi_0$ such that $\sup |\psi - \psi_0| < \varepsilon$. Then
  \[
    \biggl| \int_I \psi \phi_m \, \mathrm{d}m - \int_I \psi \phi_n \, \mathrm{d}m \biggr|
    \leq
    \biggl| \int_I \psi_0 \phi_m \, \mathrm{d}m - \int_I \psi_0 \phi_n \, \mathrm{d}m \biggr| + 2 \varepsilon < 3 \varepsilon
  \]
  if $n$ and $m$ are sufficiently large. Hence $\int_I \psi \phi_n \, \mathrm{d}m$ is Cauchy.
\end{proof}

We now let $\phi_n = \L^n 1$. This is a Cauchy sequence in the metric $\theta_\mathcal{E}$ according to Corollary~\ref{cor:Liscontraction}.

Moreover
\[
  \int_I \phi_1 \, \mathrm{d}m = \int_I 1 \cdot \L^n 1 \, \mathrm{d}m = \int_I \U 1 \cdot 1 \, \mathrm{d}m = \int_I 1 \, \mathrm{d}m = 1.
\]
Hence $\mu_n (\psi) = \int_I \psi \phi_n \, \mathrm{d}m$ defines a probability measure. We let $\mu$ be the weak limit of $\mu_n$ as $n \to \infty$. This limit exists by Lemma~\ref{lem:cauchy}.

\section{Smooth density}
We are now ready to prove that the measure $\mu$ is absolutely continuous with respect to Lebesgue measure, with density in $\mathcal{C}^\infty$. So far, our results are valid for $\varepsilon \geq 0$. We now assume that $\varepsilon > 0$.

If $\psi$ if a differentiable function then
\[
  \frac{\mathrm{d}}{\mathrm{d}x} \U \psi (x) = \lambda \U (\psi') (x).
\]
Note that the corresponding formula for $\frac{\mathrm{d}}{\mathrm{d}x} \L \psi (x)$ is  more involved because of the presens of $\chi_{f_{i,t} (I)}$ in \eqref{eq:Ldef}. However if $\psi \in \mathcal{C}_0^1 (I)$, then we have
\[
  \frac{\mathrm{d}}{\mathrm{d}x} \L \psi (x) = \frac{1}{\lambda} \L (\psi') (x).
\]

We then get that
\begin{multline*}
  \int_I \psi \cdot (\L^n \phi)^{(k)} \, \mathrm{d}m
  = \frac{1}{\lambda^{k l}} \int_I \psi \cdot \L^l (\L^{n-l} \phi)^{(k)} \, \mathrm{d}m \\
  = \frac{1}{\lambda^{k l}} \int_I \U^l \psi \cdot (\L^{n-l} \phi)^{(k)} \, \mathrm{d}m
  = \frac{(-1)^k}{\lambda^{k l} } \int_I (\U^l \psi)^{(k)} \cdot \L^{n-l} \phi \, \mathrm{d} m,
\end{multline*}
holds for any $\phi$, $\psi$ and $k$, $l$ such that the derivatives exist.

Let $t \in I$ and $\psi_t = \chi_{[-1, t]}$. Then
\begin{multline} \label{eq:derivative}
  \phi_n^{(k-1)} (t) - \phi_n^{(k-1)} (0) 
  = \phi_n^{(k-1)} (t) = \int_{-1}^t \phi_n^{(k)} \, \mathrm{d}m \\
  = \int_I \psi_t \cdot (\L^n 1)^{(k)} \, \mathrm{d}m 
  = \frac{(-1)^k}{\lambda^{k l}} \int_I (\U^l \psi_t)^{(k)} \cdot \L^{n-l} 1 \, \mathrm{d}m.
\end{multline}
If we let $l = k + 2$, then $(\U^l \psi_t)^{(k)}$ is defined, and the rightmost integral in \eqref{eq:derivative} converges by Lemma~\ref{lem:cauchy}. This shows that the density of $\mu$ is in $\mathcal{C}^\infty$. Moreover we get
\begin{multline*}
  | \phi_n^{(k-1)} (t) | \leq \biggl| \int_{-1}^t \phi_n^{(k)} \, \mathrm{d}m \biggr| = \biggl|  \frac{1}{\lambda^{k(k+2)}} \int_I (\U^{k+2} \psi_t)^{(k)} \cdot \L^{n-k-2} 1 \, \mathrm{d}m \biggr| \\
   \leq \frac{ \sup | (\U^{k+2} \psi_t)^{(k)} | }{\lambda^{k(k+2)}} \int_I \L^{n-k-2} 1 \, \mathrm{d}m
   = \frac{ \sup | (\U^{k+2} \psi_t)^{(k)} | }{\lambda^{k(k+2)}}.
\end{multline*}

Let us estimate $\sup | (\U^{k+2} \psi_t)^{(k)} |$.
We first observe that if $\psi$ is continuous then
\begin{align*}
  \frac{\mathrm{d}}{\mathrm{d}x} \U \psi (x)
  &= \frac{\mathrm{d}}{\mathrm{d}x} \sum_{i = 1}^n p_i \int_0^\varepsilon \psi \circ f_{i, t} (x) h(t) \, \mathrm{d}m \\
  &= \frac{\mathrm{d}}{\mathrm{d}x} \sum_{i = 1}^n p_i \int_{\lambda x + a_i}^{\lambda x + a_i + b_i \varepsilon} \psi (s) h \Bigl( \frac{ s - \lambda x - a_i }{ b_i } \Bigr) \frac{1}{b_i} \, \mathrm{d}m \\
  &= \sum_{i = 1}^n \frac{p_i \lambda}{b_i} \bigl( \psi \circ f_{i, \varepsilon} (x) h (\varepsilon) - \psi \circ f_{i, 0} (x) h (0) \bigr) \\
  &\hspace{2cm} - \sum_{i = 1}^n \frac{p_i \lambda}{b_i} \int_0^\varepsilon \psi \circ f_{i, t} (x) h'(t) \, \mathrm{d}t.
\end{align*}
This implies that
\[
  \sup \Bigl| \frac{\mathrm{d}}{\mathrm{d}x} \U \psi (x) \Bigr|
  \leq \sum_{i = 1}^n \frac{p_i \lambda}{b_i} \sup |\psi| (h(\varepsilon) + h(0) + \varepsilon \sup |h'|).
\]
Writing $(\U^{k+2} \psi_t)^{(k)}$ as
\[
  (\U^{k+2} \psi_t)^{(k)} (x) = \lambda^{\frac{k (k + 1)}{2} } \U \frac{\mathrm{d}}{\mathrm{d}x} \U \frac{\mathrm{d}}{\mathrm{d}x} \U \cdots \frac{\mathrm{d}}{\mathrm{d}x} \U^2 \psi_t (x),
\]
we can conclude that
\begin{multline*}
  \sup | \U^{k+2} \psi_t)^{(k)} | \leq \lambda^{\frac{k (k + 1)}{2} } \biggl( \sum_{i = 1}^n \frac{p_i \lambda}{b_i} (h(\varepsilon) + h(0) + \varepsilon \sup |h'|) \biggr)^k \sup |\U^2 \psi| \\
  \leq
  \lambda^{\frac{k (k + 1)}{2} } \biggl( \sum_{i = 1}^n \frac{p_i \lambda}{b_i} (h(\varepsilon) + h(0) + \varepsilon \sup |h'|) \biggr)^k.
\end{multline*}
This yields
\[
  \sup | \phi_n^{(k-1)} | \leq \lambda^{-\frac{k(k+1)}{2} } \biggl( \sum_{i = 1}^n \frac{p_i}{b_i} (h(\varepsilon) + h(0) + \varepsilon \sup |h'|) \biggr)^k.
\]
If we let $\phi$ denote the density of the measure $\mu$, then
\[
  \sup | \phi^{(k)} | \leq \lambda^{ - \frac{(k+1)(k+2) }{2} } \biggl( \sum_{i = 1}^n \frac{p_i}{b_i} (h(\varepsilon) + h(0) + \varepsilon \sup |h'|) \biggr)^{k+1}.
\]

\end{document}